\documentclass[11pt]{article}
\def\thetitle{Rigidity of expanders and pseudorandom graphs}
\def\thepdftitle{Rigidity of expanders and pseudorandom graphs}

\usepackage{graphicx}

\usepackage{amsmath,amssymb}

\usepackage[usenames,dvipsnames,svgnames,table]{xcolor}
\definecolor{CombinatoricaAqua}{HTML}{00698C}
\definecolor{CombinatoricaBlue}{HTML}{3A3293}
\definecolor{CombinatoricaBrown}{HTML}{66220C}
\definecolor{CombinatoricaRed}{HTML}{DF2A27}
\definecolor{HarvardCrimson}{rgb}{0.6471, 0.1098, 0.1882}
\definecolor{DAGreen}{HTML}{339900}

\makeatletter
\let\reftagform@=\tagform@
\def\tagform@#1{\maketag@@@
	{(\ignorespaces\textcolor{CombinatoricaBrown}{#1}\unskip\@@italiccorr)}}
\renewcommand{\eqref}[1]{\textup{\reftagform@{\ref{#1}}}}
\makeatother

\usepackage[backref=page]{hyperref}
\hypersetup{%
	unicode,
	pdfencoding=auto,
	pdfauthor={Michael Krivelevich, Alan Lew, and Peleg Michaeli},
	pdftitle={\thepdftitle},
	pdfsubject={},
	pdfkeywords={},
	colorlinks=true,
	citecolor=CombinatoricaBlue,
	linkcolor=CombinatoricaAqua,
	anchorcolor=CombinatoricaBrown,
	urlcolor=HarvardCrimson}

\usepackage{amsthm}
\usepackage{thmtools}
\usepackage{bbm}
\usepackage{enumitem}
\usepackage[capitalize]{cleveref}
\Crefname{mainthm}{Theorem}{Theorems}
\Crefname{fact}{Fact}{Facts}
\Crefname{claim}{Claim}{Claims}

\declaretheoremstyle[
spaceabove=\topsep, spacebelow=\topsep,
headfont=\color{CombinatoricaBrown}\normalfont\bfseries,
bodyfont=\itshape,
]{thm}
\declaretheoremstyle[
spaceabove=\topsep, spacebelow=\topsep,
headfont=\color{CombinatoricaBrown}\normalfont\bfseries,
bodyfont=\normalfont,
]{dfn}
\declaretheoremstyle[
spaceabove=0.5\topsep, spacebelow=0.5\topsep,
headfont=\color{CombinatoricaBrown}\normalfont\bfseries,
bodyfont=\normalfont,
]{rmk}

\declaretheorem[style=thm,parent=section]{theorem}
\declaretheorem[style=thm,name=Theorem,sibling=theorem]{mainthm}
\declaretheorem[style=thm,sibling=theorem]{lemma}
\declaretheorem[style=thm,sibling=theorem]{corollary}

\declaretheorem[style=rmk,numbered=no]{remark}

\declaretheorem[style=definition,numbered=no]{acknowledgements}
\declaretheorem[style=definition,sibling=theorem]{definition}

\usepackage[nobysame,msc-links,non-sorted-cites]{amsrefs}

\renewcommand{\eprint}[1]{\href{https://arxiv.org/abs/#1}{arXiv:#1}}

\BibSpec{book}{%
	+{}  {\PrintPrimary}                {transition}
	+{,} { \textbf}                     {title} 
	+{.} { }                            {part}
	+{:} { \textit}                     {subtitle}
	+{,} { \PrintEdition}               {edition}
	+{}  { \PrintEditorsB}              {editor}
	+{,} { \PrintTranslatorsC}          {translator}
	+{,} { \PrintContributions}         {contribution}
	+{,} { }                            {series}
	+{,} { \voltext}                    {volume}
	+{,} { }                            {publisher}
	+{,} { }                            {organization}
	+{,} { }                            {address}
	+{,} { \PrintDateB}                 {date}
	+{,} { }                            {status}
	+{}  { \parenthesize}               {language}
	+{}  { \PrintTranslation}           {translation}
	+{;} { \PrintReprint}               {reprint}
	+{.} { }                            {note}
	+{.} {}                             {transition}
	+{}  {\SentenceSpace \PrintReviews} {review}
}
\BibSpec{incollection}{%
  +{}  {\PrintAuthors}                {author}
  +{,} { \textit}                     {title}
  +{.} { }                            {part}
  +{:} { \textit}                     {subtitle}
  +{,} { \PrintContributions}         {contribution}
  +{,} { \PrintConference}            {conference}
  +{}  {\PrintBook}                   {book}
  +{,} { }                            {booktitle}
	+{}  { \PrintEditorsB}              {editor}
	+{,} { }                            {publisher}
  +{,} { \PrintDateB}                 {date}
  +{,} { pp.~}                        {pages}
  +{,} { }                            {status}
  +{,} { \PrintDOI}                   {doi}
  +{,} { available at \eprint}        {eprint}
  +{}  { \parenthesize}               {language}
  +{}  { \PrintTranslation}           {translation}
  +{;} { \PrintReprint}               {reprint}
  +{.} { }                            {note}
  +{.} {}                             {transition}
  +{}  {\SentenceSpace \PrintReviews} {review}
}
\BibSpec{misc}{%
    +{}  {\PrintAuthors}                {author}
    +{,} { \textit}                     {title}
    +{.} { }                            {part}
    +{:} { \textit}                     {subtitle}
    +{,} { \PrintContributions}         {contribution}
    +{.} { \PrintPartials}              {partial}
    +{,} { }                            {journal}
    +{}  { \textbf}                     {volume}
    +{}  { \PrintDatePV}                {date}
    +{,} { \issuetext}                  {number}
    +{,} { \eprintpages}                {pages}
    +{,} { }                            {status}
    +{,} { \url}                        {url}
    +{,} { \PrintDOI}                   {doi}
    +{,} { available at \eprint}        {eprint}
    +{}  { \parenthesize}               {language}
    +{}  { \PrintTranslation}           {translation}
    +{;} { \PrintReprint}               {reprint}
    +{.} { }                            {note}
    +{.} {}                             {transition}
    +{}  {\SentenceSpace \PrintReviews} {review}
}

\makeatletter
\def\mathcolor#1#{\@mathcolor{#1}}
\def\@mathcolor#1#2#3{%
	\protect\leavevmode
	\begingroup
	\color#1{#2}#3%
	\endgroup
}
\makeatother
\definecolor{Red}{rgb}{0.618,0,0}
\definecolor{Blue}{rgb}{0,0,1}
\definecolor{Green}{rgb}{0,0.298,0}

\newcommand{\iso}{\mathrm{i}}

\usepackage{sectsty}
\sectionfont{\color{CombinatoricaBrown}}
\subsectionfont{\color{CombinatoricaBrown}}
\subsubsectionfont{\color{CombinatoricaBrown}}

\usepackage{soul}
\soulregister\ref7

\usepackage{pifont}
\usepackage{calc}

\usepackage[
	a4paper,
	left=1in,
	right=1in,
	top=1in,
	bottom=1in
]{geometry}

\title{\thetitle}

\author{
  Michael Krivelevich\thanks{
    School of Mathematical Sciences,
    Tel Aviv University,
    Tel Aviv 6997801, Israel.
    Email: \href{mailto:krivelev@tauex.tau.ac.il}
                {\tt krivelev@tauex.tau.ac.il}.
    Research supported in part by NSF-BSF grant 2023688.
  }
  \and
  Alan Lew\thanks{
    Faculty of Mathematics, Technion Israel Institute of Technology, Technion City, Haifa 3200003, Israel;
    Email: \href{mailto:alanlew@technion.ac.il}
                {\tt alanlew@technion.ac.il}.
  }
  \and
  Peleg Michaeli\thanks{
    Mathematical Institute,
    University of Oxford,
    Oxford, UK.
    Email: \href{mailto:peleg.michaeli@maths.ox.ac.uk}
                {\tt peleg.michaeli@maths.ox.ac.uk}.
    Research supported by ERC Advanced Grant 883810.
    For the purpose of Open Access, the author has applied a CC BY public
    copyright licence to any Author Accepted Manuscript version arising from
    this submission.
  }
}

\makeatletter
\def\namedlabel#1#2{\begingroup
  #2%
  \def\@currentlabel{#2}%
  \phantomsection\label{#1}\endgroup
}
\makeatother

\usepackage{mleftright}
\mleftright
\newcommand{\defn}[1]{{\bfseries #1}}

\newcommand{\eps}{\varepsilon}
\renewcommand{\phi}{\varphi}

\newcommand{\RR}{\mathbb{R}}

\newcommand{\cG}{\mathcal{G}}

\newcommand{\cP}{\mathcal{P}}

\newcommand{\sm}{\smallsetminus}
\newcommand{\es}{\varnothing}

\newcommand{\floor}[1]{\left\lfloor{#1}\right\rfloor}
\newcommand{\ceil}[1]{\left\lceil{#1}\right\rceil}

\newcommand{\vect}{\mathbf}

\newcommand{\pr}[0]{\mathbb{P}}
\newcommand{\E}[0]{\mathbb{E}}

\newcommand{\whp}[0]{\textbf{whp}}

\newcommand{\p}{\vect{p}} 

\usepackage{tabto}

\usepackage{comment}

\usepackage{subcaption}
\usepackage{tikz,pgfplots}
\pgfplotsset{compat=1.16}
\usetikzlibrary{calc}

\begin{document}
\date{}
\maketitle

\begin{abstract}
  A graph $G=(V,E)$ is called $d$-rigid if, for a generic embedding of its vertices in $\mathbb{R}^d$, the only continuous motions of the vertices preserving the distances between all pairs of adjacent vertices are those induced from the isometries of $\mathbb{R}^d$ (that is, translations and rotations of the whole graph). In this paper, we study rigidity properties of pseudorandom graphs.
   First, we consider $C$-expander graphs, a class of graphs recently studied in the context of Hamiltonicity of pseudorandom graphs. These are $n$-vertex graphs for which every vertex set $A$ of size smaller than $n/(2C)$ has a neighbourhood of size at least $C |A|$, and for every pair of disjoint sets $A,B$ of size at least $n/(2C)$ each, there is at least one edge between $A$ and $B$. We show that for every $C\ge 8$ and every integer $n\ge 9C$, every $n$-vertex $C$-expander is $\lfloor C/8\rfloor$-rigid.

  Next, we study $(n,r,\lambda)$-graphs, which are $n$-vertex $r$-regular graphs whose non-trivial adjacency eigenvalues are bounded in absolute value by $\lambda$. This is a well-known family of graphs, known to possess various pseudorandom properties. We prove that there exist absolute constants $c_1,c_2>0$ such that every $(n,r,\lambda)$-graph $G$ with $\lambda\le c_1 r$ is $\lfloor c_2 r\rfloor$-rigid.

  Our results are sharp up to the value of the universal constants involved, and they improve and extend previous work by the authors on the rigidity of random and pseudorandom graphs.
\end{abstract}

\section{Introduction}

A \defn{$d$-dimensional framework} is a pair $(G,\p)$, where $G=(V,E)$ is a graph and $\p:V\to\RR^d$ is an embedding of its vertices.
We say that $(G,\p)$ is \defn{rigid} if every continuous motion of the vertices that preserves the lengths of all edges preserves the distances between all pairs of vertices.
An embedding $\p:V\to\RR^d$ is \defn{generic} if the $d|V|$ coordinates of $\p(V)$ are algebraically independent over the rationals.
A graph $G$ is \defn{$d$-rigid} if $(G,\p)$ is rigid for a generic embedding $\p:V\to\RR^d$. It is standard that if $G$ is $d$-rigid, then it is $d'$-rigid for every $1\le d'\le d$. For notational convenience, we consider all graphs to be $0$-rigid.

In recent years, much work has been devoted to understanding rigidity properties of various models of random graphs, in various density regimes \cites{JSS07,KMT11c,The09c,KT13+,JT22,LNPR23,KLM25,KLM26+,PP24+}.
Pseudorandom graphs are, roughly speaking, families of graphs that share various key properties expected from random graphs, such as expansion properties, well-connectedness, and a relatively uniform distribution of edges.  These include many natural and well-studied families of random graphs, such as binomial random graphs, random regular graphs, and random Cayley graphs~\cites{AR94}. Moreover, there exist many families of pseudorandom graphs that are deterministic, yet successfully mimic typical key properties of truly random graphs (see, for example, \cites{LPS88,RVW02}). For more background on pseudorandom graphs, see \cites{KS06}.

It is then natural to study the rigidity of pseudorandom graphs. In addition to being, in a sense, a far-reaching extension of recent work on random graphs, one can see it as a further example of a combinatorial condition implying rigidity, continuing a quickly expanding line of work on the subject, including  Vill\'anyi's breakthrough results on vertex-connectivity conditions for rigidity and related further work \cites{Vil25,GJKV25,Gar26}, minimum degree conditions for rigidity \cites{KLM25,KLM26,JLV25+}, and partition-based sufficient conditions for rigidity \cites{LNPR25,KLM25,KLM26+}.

In this paper, we continue our work initiated in \cites{KLM25,KLM26+} on the rigidity of pseudorandom graphs, improving upon and extending earlier results, and giving essentially sharp conditions for the rigidity of expanders and pseudorandom graphs.  First, we consider $C$-expander graphs, as defined by Dragani\'c, Montgomery, Munh\'a Correia, Pokrovskiy, and Sudakov in their spectacular result on Hamiltonicity of pseudorandom graphs~\cites{DMMPS24+}.

\begin{definition}
\label{def:c_expander}
  For $C>0$, an $n$-vertex graph $G=(V,E)$ is a \defn{$C$-expander} if the following hold, where $N(A)$ denotes the external neighbourhood of $A$.
  \begin{enumerate}[label=(\roman*)]
    \item For every $A\subseteq V$ with $|A|< n/(2C)$, we have $|N(A)|\ge C|A|$.
    \item There is an edge between any two disjoint sets $A,B\subseteq V$ with $|A|,|B|\ge n/(2C)$.
  \end{enumerate}
\end{definition}

\begin{theorem}
\label{thm:c_expander}
  For every $C\ge 8$ and every integer $n\ge 9C$, every $n$-vertex $C$-expander is $\floor{C/8}$-rigid.
\end{theorem}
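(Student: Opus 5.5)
Set $d=\floor{C/8}\ge 1$. I would reduce the statement to a partition-based sufficient condition for $d$-rigidity of the kind developed in \cites{KLM25,KLM26+} and \cites{LNPR25}. The form I have in mind is: if for every partition of $V$ into parts $V_1,\dots,V_k$ (or at least every partition whose parts are induced-connected), the number of edges crossing between distinct parts is at least about $d(k-1)$ in a suitably weighted sense, then $G$ is $d$-rigid. The simplest tool of this type is the spanning-forest/packing argument: a graph containing $d$ edge-disjoint spanning trees, together with an extra degree/connectivity condition, is $d$-rigid in the sense of Lovász–Yemini-type results.

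An alternative route, which I would keep in reserve, goes through the minimum-degree results \cites{KLM25,KLM26}. Roughly, these say that a graph in which every vertex has degree at least $\frac{n}{2}+ (d-2)$ or so, or a graph that is highly connected and has good expansion, is $d$-rigid. Since $C$-expanders are sparse, I expect the partition approach to be the correct one. So the plan is to verify a Nash-Williams–Tutte-type cut condition with the count boosted by a factor $d$.

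The key steps, in order, are as follows.

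\textbf{Step 1 (lower bound on crossing edges).} From (i) and (ii), derive that for any partition $\mathcal P=\{V_1,\dots,V_k\}$ the number of crossing edges is at least $cC(k-1)$. Split the parts into small ones ($|V_i|<n/(2C)$) and large ones. Each small part $V_i$ has $|N(V_i)|\ge C|V_i|\ge C$, so at least $C$ edges leave it, and small parts therefore contribute at least $C\cdot\#\{\text{small}\}/2$ crossing edges. For large parts, condition (ii) combined with a greedy merging argument shows that few of them can exist. There are at most $2C$ large parts, and any grouping of them into two collections each of total size at least $n/(2C)$ yields an edge between the collections. From this we get connectivity among the large parts, and by iterating with edge removal we get many crossing edges. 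In fact, I would rather show that the union of large parts behaves like a $C/2$-edge-connected structure.

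\textbf{Step 2 (applying the partition criterion).} Plug the bound from Step 1 into the criterion from \cites{KLM26+}, which (I will assume) states that $G$ is $d$-rigid provided that every partition satisfies $\sum_{i<j} e(V_i,V_j)\ge d(k-1)+$ a correction term depending on part sizes, for instance $\binom{d+1}{2}$ adjustments when parts are small. The constant $8$ should absorb the loss: a factor 2 from double-counting edges, a factor 2 from the small/large split, and a factor 2 from the rigidity criterion requiring roughly $2d$ per part. The hypothesis $n\ge 9C$ guarantees $n/(2C)>4$, so that large parts genuinely have size at least $5$ and the small-set expansion is nonvacuous.

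\textbf{Main obstacle.} The hardest step is the large-parts regime. Condition (ii) gives only a single edge between two big sets, yet we need $\Omega(C)$ crossing edges per large part. My first attempt would be the following. Remove the endpoints of the crossing edges found so far from the relevant sets. Since $|A|\ge n/C$ whenever $A$ is a union of large parts of total size bounded away from $n/(2C)$, we can delete up to $n/(2C)$ vertices and still have size $n/(2C)$, so (ii) can be reapplied repeatedly. This gives many vertex-disjoint crossing edges; the question is whether it gives $\Omega(C)$ of them. If that fails, I would instead apply (i) to a subset of size just under $n/(2C)$ of a large part near its boundary, which has neighbourhood at least $C|A|$ and hence reaches outside its part, and combine this with (ii) to push the count up.
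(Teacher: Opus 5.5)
\textbf{There is a genuine gap, and it sits in Step~2.} The rigidity criterion you plan to invoke is not a known result, and no criterion of that shape can hold for $d\ge 2$. A lower bound on the number of crossing edges over all partitions never implies $d$-rigidity, however strongly it is boosted, say to at least $f(d)(k-1)$ plus any additive correction depending only on $d$.

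To see this, take two copies of $K_N$ sharing one vertex. This graph is $(N-1)$-edge-connected, so every partition into $k\ge 2$ parts has at least $k(N-1)/2$ crossing edges. It also contains about $N/2$ edge-disjoint spanning trees. Yet rotating one clique about the cut vertex is a non-trivial flex in every dimension $d\ge 2$. Edge counts are blind to small vertex separators, and those are exactly what obstructs rigidity.

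So Step~1, although essentially true for $C$-expanders, establishes the wrong property. The ``main obstacle'' you isolate, upgrading the single edge given by (ii) to $\Omega(C)$ crossing edges, is beside the point. The partition theorem that actually does the job here (\cref{thm:connector_partition}) is of a different type. It is existential: you exhibit a single suitable partition. Its hypothesis is a vertex-set condition between parts, not an edge count.

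\textbf{The missing idea is to use (ii) directly as such a vertex-set condition.} With $K=\lceil n/(2C)\rceil$, (ii) says that in any partition every pair of parts is $K$-good: every $K$-subset of one part has an edge to every $K$-subset of the other.

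Take an arbitrary equipartition into $m=\lfloor C/4\rfloor$ parts. Each part has size at least $\lfloor 4n/C\rfloor>7K-3$, which uses $n\ge 9C$. The $K$-reduced graph is then $K_m$. \Cref{thm:connector_partition} with $\eta=1$ yields $W_0$ such that $G[W_0]$ is $d$-rigid with $d=\lfloor m/2\rfloor=\lfloor C/8\rfloor$, and $|W_0|\ge n-4mK\ge n/2-C$.

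This only makes a large induced subgraph rigid. To finish, extend $W_0$ to a maximal $W$ with $G[W]$ $d$-rigid. By $0$-extension, every vertex of $U=V\setminus W$ has at most $d-1$ neighbours in $W$. There are two cases:
\begin{itemize}
\item If $0<|U|<K$, this contradicts (i). Since $N(U)\subseteq W$, we get $C|U|\le |N(U)|\le (d-1)|U|$.
\item If $|U|\ge K$, then by (ii) every $K$-subset of $U$ misses fewer than $K$ vertices of $W$. Averaging over such subsets gives $e(U,W)>(|W|-K)|U|/K>(d-1)|U|$, using $dK<|W|$.
\end{itemize}
The constant $8$ comes from needing parts of size about $8\cdot n/(2C)$ and from the halving in $\lfloor m/2\rfloor$. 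It does not come from the edge double-counting you describe.
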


The dependence on $C$ in \cref{thm:c_expander} is best possible up to the value of the absolute constant $1/8$.
Indeed, there are $C$-expanders containing vertices of degree comparable to $C$, while every $d$-rigid graph with at least $d+1$ vertices has minimum degree at least $d$.
For a concrete example, let $C\ge 2$, let $n>2C$ be an integer, and let $G$ be obtained from a clique on $n-1$ vertices by adding a new vertex $v$ adjacent to exactly $\ceil{C}$ vertices of the clique.
Then $G$ is a $C$-expander: if $A=\{v\}$ then $|N(A)|=\ceil{C}\ge C$; if $0<|A|<n/(2C)$ and $A$ contains a clique vertex, then $|N(A)|\ge n-|A|-1\ge C|A|$; and any two disjoint sets of size at least $n/(2C)>1$ both contain clique vertices, which are adjacent.
Since $\delta(G)=\ceil{C}$, this $C$-expander is not $(\ceil{C}+1)$-rigid.

We next turn to the main result of the paper, which concerns spectral pseudorandomness.
One standard model for pseudorandom regular graphs is to require all non-trivial eigenvalues of the adjacency matrix to be small; see~\cites{KS06} for background.
\begin{definition}
\label{def:nrl_graph}
  An \defn{$(n,r,\lambda)$-graph} is an $r$-regular graph on $n$ vertices such that every eigenvalue of its adjacency matrix other than the largest eigenvalue $r$ has absolute value at most $\lambda$.
\end{definition}

The Expander Mixing Lemma (see \cref{thm:xml}) readily shows that every $(n,r,\lambda)$-graph with $r/\lambda\ge 8$ is an $r/(4\lambda)$-expander, and therefore \cref{thm:c_expander} implies that such a graph is $\floor{r/(32\lambda)}$-rigid whenever $r/\lambda\ge 32$ and $n\ge 9r/(4\lambda)$.
This should be compared with~\cite{KLM25}*{Theorem~1.7}, which, together with~\cite{KLM25}*{Corollary~1.4}, gives $d$-rigidity under the condition $r\ge\max\{9d\lambda,C_0d\log d\}$, for an absolute constant $C_0$; in the notation above, \cref{thm:c_expander} yields rigidity of order $r/\lambda$.
We go substantially beyond this estimate and prove a \emph{linear-in-the-degree} rigidity bound, assuming only that the non-trivial eigenvalues are at most a fixed small multiple of $r$.
This answers a question we raised in~\cite{KLM26+}.

\begin{mainthm}
\label{thm:main}
  There exist absolute constants $c_1,c_2>0$ such that every $(n,r,\lambda)$-graph $G$ with $\lambda\le c_1 r$ is $d$-rigid for $d=\floor{c_2r}$.
\end{mainthm}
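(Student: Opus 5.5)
We prove \cref{thm:main} by reduction to a partition-based sufficient condition for rigidity of the kind developed in \cites{KLM25,KLM26+} and \cites{LNPR25}. The relevant criterion is the following. Suppose $V$ can be split into $d$ parts $V_1,\dots,V_d$ (or, more flexibly, covered by $d$ vertex sets) so that two conditions hold. First, each induced subgraph $G[V_i]$ is connected, or more strongly $1$-rigid with some robustness. Second, for every pair $i\ne j$, the bipartite graph $G[V_i,V_j]$ contains enough well-spread edges. This second condition is typically a matching or a ``connectivity across parts'' requirement. Under these two conditions $G$ is $d$-rigid. Concretely, I would use the criterion in the form: each $G[V_i]$ is connected, and for all $i<j$ there is a suitable set of edges between $V_i$ and $V_j$ (for example, a nonempty set, or a set hitting distinct components appropriately). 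Such a statement can be verified through the rigidity matroid by building $d$-dimensional infinitesimal rigidity via a ``coning/gluing'' inductive argument. I will assume this criterion in the form closest to the one in \cite{KLM26+}.

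The first step is to take a uniformly random partition of $V$ into $d=\floor{c_2 r}$ parts. Each vertex then has, in expectation, $r/d\approx 1/c_2$ neighbours in each part. This is a large constant but not logarithmic, so concentration for every vertex and every part fails. Consequently we cannot hope that every $G[V_i]$ has minimum degree bounded below. Instead I would control the parts globally using the Expander Mixing Lemma (\cref{thm:xml}). Each $V_i$ has size about $n/d$, so $e(V_i)\approx r|V_i|^2/(2n)$, with an error of at most $\lambda|V_i|$. Since $\lambda\le c_1 r$ and $r|V_i|/n\approx 1/c_2$, choosing $c_1\ll c_2$ makes the error negligible. The same argument gives $e(V_i,V_j)\approx r|V_i||V_j|/n$, and more generally we get edge-density control for all large subsets of the parts.

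The second step is to repair the parts so that each becomes connected, by transferring a small number of badly-attached vertices. A vertex of $V_i$ with few neighbours inside $V_i$, or a small component of $G[V_i]$, is moved to another part where it has many neighbours. This requires a deterministic bound on how many such bad vertices there are, again via the mixing lemma. A set $S$ of vertices each having at most $\epsilon r/d$ neighbours in a fixed large set must satisfy $|S|\lesssim (\lambda/r)^2 d^2 n$, by the standard bound $\sum_{v}(\deg_U(v)-r|U|/n)^2\le\lambda^2|U|$. We then apply a cleaning procedure, iterated until stable, and use expansion of the small sets of the graph to argue that the components of each part are either tiny or essentially all of the part.

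The main obstacle is exactly this step: guaranteeing that every $G[V_i]$ is connected, with all the cross-part conditions holding simultaneously. It is hard because the average degree within each part is only a constant $1/c_2$, so isolated and low-degree vertices really do occur. If a single reassignment step is not enough, my fallback is to relax the criterion. Rather than requiring $G[V_i]$ to be connected, I would require only that each part induces a connected graph after contracting or reassigning a small set of exceptional vertices. These exceptional vertices are handled separately by a vertex-addition (0-extension) argument: any vertex with at least $d$ neighbours in an already $d$-rigid subgraph can be added while preserving rigidity. Since every vertex has degree $r\ge d$, and the mixing lemma shows that the small exceptional set $S$ has almost all neighbours outside $S$, we can first establish rigidity of $G-S$ via the partition criterion and then add the vertices of $S$ back one by one.
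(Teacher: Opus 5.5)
\textbf{There is a genuine gap.} It sits exactly at the step you flag as the main obstacle, and your fallback does not repair it.

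\emph{First, the mixing-lemma estimates are off by a factor of $r$.} The hypothesis gives only $\lambda\le c_1r$, so $\lambda$ grows with $r$.
\begin{itemize}
\item For a part with $|V_i|\approx n/d$, the main term $r|V_i|^2/n\approx |V_i|/c_2$ is a bounded multiple of $|V_i|$. The error $\lambda|V_i|$ can be as large as $c_1r|V_i|$, so it dominates once $r>1/(c_1c_2)$. Choosing $c_1\ll c_2$ does not help, because $r$ is unbounded.
\item Your variance bound likewise gives only $|S|\lesssim(\lambda/r)^2d\,n$. For $\lambda=c_1r$ and $d=c_2r$ this is $c_1^2c_2\,rn\gg n$, which is vacuous.
\end{itemize}
So the spectral hypothesis says nothing about individual sets of size $\Theta(n/r)$.

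\emph{Second, the partition criterion cannot be met with $d=\floor{c_2r}$ parts.}
\begin{itemize}
\item If the cross condition is merely $E(V_i,V_j)\ne\es$, the criterion is false. Two disjoint edges joined by one edge form a path on four vertices, which is not $2$-rigid.
\item Valid criteria of this shape, such as requiring every $G[V_i]$ and every bipartite graph $G[V_i,V_j]$ to be connected, force each vertex to have a neighbour in every other part.
\item Under a uniformly random partition of any $r$-regular graph into $c_2r$ parts, a fixed neighbourhood misses on average about $d\,e^{-r/d}=c_2r\,e^{-1/c_2}$ parts (for $r\ll n$). This tends to infinity with $r$, so for large $r$ almost every vertex is exceptional.
\end{itemize}
Hence your set $S$ is essentially all of $V$, and the plan ``prove rigidity of $G-S$, then re-add $S$ by $0$-extension'' has nothing to start from. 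Requiring every vertex to see every part is a coupon-collector condition. It only allows about $r/\log r$ parts, which is exactly where $d\log d$-type losses come from.

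\textbf{The missing idea} is a partition criterion with no per-vertex requirement that tolerates losing vertices: \cref{thm:connector_partition}.
\begin{itemize}
\item The paper takes $m=\floor{10^{-4}r}$ parts, each of size about $10^4n/r$, and $K=\ceil{100n/r}$.
\item It asks only that the $K$-reduced graph have minimum degree at least $0.999m'-1$ after a few parts are discarded via \cref{lem:cleanup_reduced}. Here the $K$-reduced graph consists of pairs of parts in which every two $K$-subsets, one from each part, span an edge.
\item The theorem then gives a $\floor{0.499m'}$-rigid $G[W]$ missing at most $4K$ vertices from each surviving part. So $d$ is about half the number of parts, not equal to it.
\item $K$-goodness of a random equipartition is not obtained from the mixing lemma on the parts. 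It comes from counting $\alpha$-poor $K$-sets by ordered exposure (\cref{lem:poor_sets_general,lem:random_parts}), which applies jumbledness only against sets of linear size.
\item This yields $|W|\ge 0.9n$. Vertices are then added by $0$-extension and jumbledness until at most $2(\lambda/r)^2n$ remain.
\item Finally the whole graph is $d$-rigid via $\iso(G)\ge(r-\lambda)/2\ge d$ and \cref{lem:absorption}.
\end{itemize}
Your closing $0$-extension step is essentially this last absorption. It works only once a $d$-rigid subgraph on at least half the vertices is already in hand, and that is what your argument fails to produce.
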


The linear dependence on $r$ in the conclusion of \cref{thm:main} is best possible up to the value of the constant $c_2$.
Indeed, by the standard rigidity-matrix rank bound~\cite{AR78}, if an $n$-vertex graph with $n\ge d+1$ is $d$-rigid, then it has at least $dn-\binom{d+1}{2}$ edges.
For an $r$-regular graph, this gives $d\le (1/2+o(1))r$ when $r=o(n)$.
It would be interesting to decide whether this edge-counting obstruction is the only asymptotic obstruction for spectral expanders; in particular, whether every sequence of $(n,r,\lambda)$-graphs with $\lambda/r\to 0$ is $(1/2-o(1))r$-rigid.

We prove \cref{thm:main} by working in the more general setting of jumbled graphs.
For a graph $G=(V,E)$ and $A,B\subseteq V$, let $e(A,B)$ denote the number of ordered pairs $(u,v)\in A\times B$ such that $\{u,v\}\in E$.
\begin{definition}
\label{def:jumbled_graph}
Let $0<p<1$ and $\beta>0$. A graph $G=(V,E)$ on $n$ vertices is called \defn{$(p,\beta)$-jumbled} if, for all $A,B\subseteq V$,
  \[
    |e(A,B)- p |A| |B|| \le \beta \sqrt{|A| |B|}.
  \]
\end{definition}

\begin{mainthm}
\label{thm:main_jumbled}
  There exist absolute constants $c_1,c_2>0$ such that for every $n$-vertex $(p,\beta)$-jumbled graph $G=(V,E)$ with $\beta \le c_1 np$ there exists $S\subseteq V$ with $|S|\ge (1-2(\beta/(np))^2)n\ge (1-2c_1^2)n$, such that $G[S]$ is $d$-rigid for $d=\floor{c_2 np}$.
\end{mainthm}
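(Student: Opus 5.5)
Write $\eps=\beta/(np)\le c_1$ and $d=\floor{c_2np}$. The plan has three steps: first clean the graph so that its minimum degree is linear in $np$, then verify a partition-type sufficient condition for $d$-rigidity on the cleaned graph, and finally use jumbledness to check that condition.

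\textbf{Step 1: cleaning.} Let $B$ be the set of vertices of degree less than $np/2$. Applying jumbledness to $(B,V)$ gives
\[
e(B,V)\ge p|B|n-\beta\sqrt{|B|n}.
\]
Combined with $e(B,V)<|B|np/2$, this yields $|B|\le 4\beta^2/(np^2)=4\eps^2 n$. That is off by a factor of $2$ from the bound $|S|\ge(1-2\eps^2)n$ in the statement. I would recover the constant either by deleting vertices whose degree is below $(1-\theta)np$ for a suitable $\theta$ close to $1$, or by deleting vertices with few neighbours inside the current set. The second version is an iterative peeling procedure. Start with $S=V$ and repeatedly delete a vertex with fewer than $np/4$ neighbours in the current $S$. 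If the deleted set $D$ ever reached size $2\eps^2 n$, then
\[
e(D,V)\le |D|np/4+e(D,D)\le |D|np/4+p|D|^2+\beta|D|.
\]
This contradicts the jumbled lower bound $e(D,V)\ge p|D|n-\beta\sqrt{|D|n}$ once $c_1$ is small. So the process stops with $|S|\ge(1-2\eps^2)n$ and $\delta(G[S])\ge np/4$. Moreover, $G[S]$ inherits $(p,\beta)$-jumbledness on its own subsets.

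\textbf{Step 2: a rigidity criterion.} Next I would invoke a partition-based sufficient condition of the kind proved in \cite{KLM25,KLM26+}. Roughly, a graph is $d$-rigid if it has minimum degree at least $Kd$ and every "sparse cut" structure is excluded: for every partition of the vertex set into parts, enough edges cross between parts, in a way comparable to the edge count $dn-\binom{d+1}{2}$. Equivalently, I would aim to show that $G[S]$ contains $d$ edge-disjoint spanning structures suitably arranged. The precise form I would target is the criterion from \cite{KLM25}. That criterion says a graph is $d$-rigid provided it has minimum degree at least $C_0 d\log d$ and every set $X$ with $|X|\le |V|/2$ satisfies $e(X,V\setminus X)\ge C_0 d|X|$. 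I would not use that criterion verbatim, since the $\log d$ loss is exactly what must be avoided. Instead I would use a variant that replaces the $\log d$ minimum-degree requirement by a local density condition: small sets $X$ (of size at most $O(d)$) span few edges, so each vertex of $X$ sends most of its degree outside $X$.

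\textbf{Step 3: checking the hypotheses.} The expansion hypotheses follow from jumbledness on $G[S]$. For $|X|\le |S|/2$,
\[
e(X,S\setminus X)\ge p|X|\,|S\setminus X|-\beta\sqrt{|X|\,|S\setminus X|}.
\]
This is at least $c\,np|X|$ when $|X|\ge (\beta/(np))^2 n$. For smaller $X$, I would use the minimum degree together with the bound
\[
e(X,X)\le p|X|^2+\beta|X|,
\]
which shows that $X$ sends at least $np/4-p|X|-\beta\ge np/8$ edges per vertex outward. Both regimes give the linear-in-$np$ expansion needed for $d=\floor{c_2np}$.

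\textbf{Main obstacle.} The hard step is Step 2: obtaining a sufficient condition for $d$-rigidity that is linear in $d$, rather than losing a $\log d$ factor. I would attempt this by induction on dimension via vertex splitting or coning, keeping a reserve of degree at each step. Alternatively, I would try a stress-free argument in which the rigidity-matrix rank is bounded via a random partition of $S$ into $d$ parts, each of which is $1$-rigid (connected) and jumbled. These would then be glued as in the partition criteria of \cite{LNPR25}.
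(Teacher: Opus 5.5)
Your Step~2 is not a step of the proof but essentially the whole theorem, and it is left unproved. You need a deterministic criterion of the form ``minimum degree, edge expansion and small-set sparsity, all linear in $d$, imply $d$-rigidity''. You note yourself that the criterion in \cite{KLM25} loses a $\log d$ factor, no linear version is established in \cites{KLM25,KLM26+}, and you do not supply one. Such a criterion would also be much stronger than the statement. For an $(n,r,\lambda)$-graph with $\lambda\le c_1r$, \cref{thm:xml} and \cref{lem:spectral_iso} verify all its hypotheses with $d=\Theta(r)$, so it would prove \cref{thm:main} in a few lines. Your Step~3 only checks the easy expansion hypotheses; nothing in the proposal converts expansion into rigidity.

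Your fallbacks do not close the gap either. The coning/vertex-splitting induction is not made concrete. The partition into $d=\floor{c_2np}$ parts fails quantitatively. The parts have size about $1/(c_2p)$, so each $G[V_i]$ has average degree only about $1/c_2$. For a random partition, with $p$ bounded away from $1$ and $n$ large, some vertices will typically have no neighbour in their own part or in some other part. Moreover, jumbledness controls $e(A,B)$ only when $\sqrt{|A||B|}$ is large compared with $\beta/p=\eps n$, which in the sparse regime is far above the part size. So a gluing criterion that requires every part and every pair of parts to be connected cannot be verified. What is needed is a version that tolerates discarding some vertices from each part, followed by an absorption step.

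That is the paper's route. It uses \cref{thm:connector_partition} from \cite{KLM26+}. Call a pair of parts $K$-good if every $K$-subset of one and every $K$-subset of the other span an edge. If all parts have size greater than $7K-3$ and the graph of $K$-good pairs has minimum degree at least $\eta m-1$, then deleting at most $4K$ vertices from each part leaves a $\floor{(\eta-1/2)m}$-rigid subgraph.

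The paper takes a uniformly random equipartition into $m=\floor{10^{-4}np}$ parts with $K=\ceil{100/p}$. It shows that each pair of parts is $K$-bad with probability at most $10^{-7}$. Jumbledness enters only at linear scales (\cref{lem:large_set_degrees}), through an ordered-exposure count of $\alpha$-poor $K$-sets (\cref{lem:poor_sets_general}) and a union bound over pairs of $K$-sets (\cref{lem:random_parts}).

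After discarding a few parts via \cref{lem:cleanup_reduced}, the reduced graph has minimum degree at least $0.999m'-1$. This gives a $d$-rigid $G[W]$ with $|W|\ge 0.9n$. Finally, a maximal $d$-rigid $S\supseteq W$ is grown by $0$-extensions (\cref{lem:0_extension}). Applying jumbledness to $e(U,S)$ with $U=V\setminus S$ gives $|U|\le 2\eps^2 n$. So your Step~1 peeling is fine but unnecessary: the size bound on $S$ comes out of the absorption step.
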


\Cref{thm:main_jumbled} strengthens the corresponding jumbled-graph result from~\cite{KLM25}*{Proposition~4.4}, together with~\cite{KLM25}*{Corollary~1.4}.
In \cref{thm:main,thm:main_jumbled}, we may take $c_1=10^{-21}$ and $c_2=10^{-6}$. We made no serious attempt at optimizing these constants.

\subsection{Applications}

Recall that for a sequence $\{A_n\}_{n=1}^{\infty}$ of events in a probability space, we say that $A_n$ occurs \emph{with high probability}, or \whp{} for short, if $\pr(A_n)\to 1$ as $n\to\infty$.
Next, we present a few immediate applications of our main results.
Let $\cG_{n,r}$ denote the uniform distribution on $r$-regular graphs on vertex set $[n]$, where $1\le r<n$ and $nr$ is even.
Combining \cref{thm:main} with known spectral estimates for random regular graphs (see \cref{thm:random_regular_spectral} for details) gives the following consequence.

\begin{corollary}
\label{cor:random_regular}
  There exists an absolute constant $c>0$ such that the following holds.
  Let $3\le r=r(n)<n$ satisfy $nr$ even, and let $G\sim \cG_{n,r}$.
  Then $G$ is \whp{} $\floor{cr}$-rigid.
\end{corollary}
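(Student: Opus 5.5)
The corollary should follow from \cref{thm:main} together with known spectral estimates for $\cG_{n,r}$, plus direct handling of the ranges where those estimates fail or are unnecessary. Fix the constants $c_1,c_2$ of \cref{thm:main}. The spectral input (\cref{thm:random_regular_spectral}) is the standard bound: for $3\le r\le n/2$, \whp{} every nontrivial eigenvalue of $G\sim\cG_{n,r}$ is at most $C_0\sqrt{r}$ in absolute value. This is Friedman and Bordenave for fixed $r$, Broder--Frieze--Kahn--Szemer\'edi / Cook--Goldstein--Johnson for $r=o(\sqrt n)$, and Tikhomirov--Youssef / Sarid for larger $r$. Consequently, if $r\ge r_0:=(C_0/c_1)^2$, then \whp{} $\lambda\le C_0\sqrt r\le c_1 r$, and \cref{thm:main} gives $\floor{c_2 r}$-rigidity.

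The small-degree range $3\le r<r_0$ needs a separate argument. Choose $c\le \min\{c_2, 1/r_0\}$. Then $\floor{cr}=0$ for all such $r$, and the claim holds trivially because every graph is $0$-rigid by convention. Here it matters that $r_0$ is an absolute constant, since we need $cr<1$ throughout this range.

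The dense range $r>n/2$ also needs care, in case the spectral bound is only available for $r\le n/2$. Pass to the complement $\bar G\sim\cG_{n,n-1-r}$. Its nontrivial eigenvalues are $-1-\mu$, where $\mu$ ranges over the nontrivial eigenvalues of $\bar G$. If $n-1-r\ge 3$, this gives $\lambda(G)\le 1+C_0\sqrt{n}\le c_1 r$ for $n$ large, so \cref{thm:main} applies again. The degenerate cases $n-1-r\le 2$ are handled directly: $G$ is then $K_n$ minus a $2$-regular graph or a matching. Such a graph has $\lambda\le 3\le c_1 r$ for large $n$, so \cref{thm:main} covers it as well. Alternatively, one can use the fact that dense graphs of this form are $\Omega(n)$-rigid by \cref{thm:c_expander}.

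Finally, take $c=\min\{c_2,1/r_0\}$. In every range, $\floor{cr}\le\floor{c_2 r}$, and $d$-rigidity implies $d'$-rigidity for $d'\le d$, so the conclusion holds uniformly. The main obstacle is bookkeeping rather than mathematics. The spectral estimates for $\cG_{n,r}$ come from several papers with different ranges of $r$, and the proof must check that together they cover all $3\le r<n$ with a uniform constant $C_0$ and a \whp{} guarantee. This holds in particular along sequences $r(n)$ that oscillate between regimes, which is handled by applying the estimate separately on subsequences.
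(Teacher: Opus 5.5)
Your argument is correct and is essentially the paper's proof. The paper also takes $\lambda(G)\le C_0\sqrt r$ \whp{} from \cref{thm:random_regular_spectral}, chooses a threshold $R$ with $C_0/\sqrt r\le c_1$ for $r\ge R$ and $c\le c_2$ with $cR<1$, and concludes via $\floor{cr}=0$ below $R$ and \cref{thm:main} above it. The only difference is where the dense range is handled: the paper builds the complement trick into \cref{thm:random_regular_spectral} itself (obtaining $\lambda(G)\le 1+\lambda(H)=O(\sqrt{n-1-r})=O(\sqrt r)$), whereas you do it inside the corollary, using the cruder bound $1+C_0\sqrt n\le c_1 r$ for $r>n/2$ and treating $n-1-r\le 2$ by hand, which works equally well.
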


\cref{cor:random_regular} recovers, up to the value of the absolute constant, the random-regular graph result from~\cite{KLM26+}, while also extending it by allowing the degree $r$ to grow with $n$.

We next record an application of \cref{thm:c_expander} to graphs satisfying a purely global connectivity condition.
For a positive integer $s$, call a graph $G$ an \defn{$s$-connector} if every two disjoint $s$-element vertex sets are joined by an edge.

\begin{corollary}
\label{cor:connector}
  Let $s,n$ be positive integers with $n\ge 72s$, and let $G$ be an $n$-vertex $s$-connector.
  Then there exists $U\subseteq V(G)$ with $|U|>n-s$ such that $G[U]$ is $\floor{n/(72s)}$-rigid.
\end{corollary}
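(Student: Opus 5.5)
We will reduce \cref{cor:connector} to \cref{thm:c_expander} by deleting a small "bad" set of vertices with poor expansion. Set $C=n/(9s)$, so that $\floor{C/8}=\floor{n/(72s)}$ and the hypothesis $n\ge 72s$ gives $C\ge 8$. If $\floor{n/(72s)}=0$ there is nothing to prove, since every graph is $0$-rigid.

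The first step is the standard extraction procedure. Build a set $B$ greedily: while there is a set $X\subseteq V\sm B$ with $|X|\le s$ (or a suitably bounded size) and $|N_{G-B}(X)|<C|X|$, add $X$ to $B$. The cleaner version is to take $B$ to be a maximal set with $|B|<s$ and $|N(B)|$ small. Concretely, let $B$ be a maximal set satisfying $|B|\le 2s$ and $|N_G(B)\sm B|<C|B|$, hmm, but an $s$-connector structure is better used as follows. If $|B|\ge s$, then $B$ and $V\sm(B\cup N(B))$ are disjoint sets with no edges between them. The second set has size at least $n-|B|-C|B|\ge s$ provided $|B|(C+1)\le n-s$, which contradicts the connector property. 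So any set $B$ with $|N(B)|<C|B|$ and $s\le |B|\le (n-s)/(C+1)$ is impossible. Since $(n-s)/(C+1)\ge 2s$ for $C=n/(9s)$, I would run the greedy union process. Let $B$ be a maximal union of such "non-expanding" pieces, kept with $|B|<s$. If adding a new non-expanding piece $X$ (with $|X|<s$, in $G-B$) occurred, the union would have size below $2s$ and still be non-expanding in $G$, because $N(B\cup X)\subseteq N(B)\cup N_{G-B}(X)$. Hence its size must stay below $s$, which maintains the invariant. So we obtain $U=V\sm B$ with $|U|>n-s$ in which every $X\subseteq U$ with $|X|<s$ satisfies $|N_{G[U]}(X)|\ge C|X|$.

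The second step is to verify that $G[U]$ is a $C'$-expander for a suitable $C'$ with $\floor{C'/8}\ge\floor{n/(72s)}$, with $n'=|U|$. Condition (ii) follows from the connector property as soon as $n'/(2C')\ge s$. For condition (i), sets of size below $s$ expand by the construction above. For sets $A$ with $s\le|A|<n'/(2C')$, the complement of $A\cup N(A)$ has no edges to $A$, so it has size below $s$. This gives $|N(A)|\ge n'-|A|-s$, which is at least $C'|A|$ when $|A|(C'+1)\le n'-s$. Choosing $C'$ around $n/(9s)$ minus lower order terms, and checking the constants, is the main obstacle. The plan is to choose $C'=n'/(2s)$-ish, capped so that $n'\ge 9C'$ and $C'/8\ge n/(72s)$. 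The factor $72=8\cdot 9$ suggests taking $C'=n/(9s)$, using $n'\ge n-s$ to verify $n'/(2C')\ge s$ and $n'\ge 9C'$. Then \cref{thm:c_expander} applied to $G[U]$ finishes the proof.
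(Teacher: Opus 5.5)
Your proposal is correct and follows essentially the same route as the paper. You greedily delete small non-expanding sets, and the $s$-connector property rules out a non-expanding set of size in $[s,2s)$, so fewer than $s$ vertices are deleted; you then check that $G[U]$ is an $n/(9s)$-expander and apply \cref{thm:c_expander}. Two small points in the final check. First, for $n'\ge 9C'=n/s$ you should use $n'\ge n-s+1$, which you have since $|U|>n-s$; the bound $n'\ge n-s$ alone fails for $s=1$. Second, the inequality $|A|(C'+1)\le n'-s$ for $s\le |A|<n'/(2C')$, which you left unchecked, does hold: $|A|(C'+1)<n'/2+n'/(2C')\le n'/2+9s/2\le n'-s$, since $n'>71s$.
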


The bound on $|U|$ is best possible: the disjoint union of $K_{n-s+1}$ and $s-1$ isolated vertices is an $s$-connector.

As a final application, we consider $f$-connected graphs.
Let $f: \mathbb{N}\to \mathbb{R}$. We say that a graph $G=(V,E)$ is \defn{$f$-connected} if, for every $A,B\subsetneq V$ such that $A\cup B=V$ and $G$ has no edges between $A\setminus B$ and $B\setminus A$, the inequality
\[
    |A\cap B|\ge f(\min\{|A\setminus B|, |B\setminus A|\})
\]
holds. The notion of $f$-connectedness was introduced by Brandt, Broersma, Diestel, and Kriesell in \cite{BBDK06} as a property interpolating between vertex connectivity (corresponding to the case of $f$ being a constant function) and expansion (corresponding to the case of linearly growing $f$). Indeed, as shown in \cite{DMMPS24+}, it is easy to prove (see \cref{sec:applications} for details) that for every constant $C\ge 4$, if $f(k)=C k$ for every $k\in\mathbb{N}$, then every $f$-connected graph $G$ is a $C/2$-expander.
Therefore, as an immediate application of \cref{thm:c_expander}, we obtain the following.
\begin{corollary}\label{cor:f_connected}
    Let $C>16$ and $n\ge 9C$. Let $f:\mathbb{N}\to \mathbb{R}$ be defined by $f(k)=C k$ for all $k$. Then, every $n$-vertex $f$-connected graph is $\lfloor C/16\rfloor$-rigid.
\end{corollary}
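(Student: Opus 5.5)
The plan is to reduce \cref{cor:f_connected} to \cref{thm:c_expander}. First I will check that every $f$-connected graph with $f(k)=Ck$ is a $C/2$-expander. Then, since $C/2>8$ and $n\ge 9C\ge 9(C/2)$, \cref{thm:c_expander} applied with parameter $C/2$ gives $\floor{(C/2)/8}=\floor{C/16}$-rigidity.

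\textbf{Expansion condition (i).} Let $A$ be nonempty with $|A|<n/C$. Set $S=N(A)$ and $B'=V\sm(A\cup S)$.

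If $B'\neq\es$, take $A_0=A\cup S$ and $B_0=S\cup B'$. Both are proper subsets of $V$ (they miss $B'$ and $A$ respectively), their union is $V$, and there are no edges between $A_0\sm B_0=A$ and $B_0\sm A_0=B'$. By $f$-connectedness,
\[
|S|\ge C\min\{|A|,|B'|\}.
\]
If the minimum is $|A|$, then $|S|\ge C|A|\ge (C/2)|A|$. Otherwise $|S|\ge C|B'|$, so
\[
n=|A|+|S|+|B'|\le |A|+(1+1/C)|S|.
\]
Since $|A|<n/C$, this gives $|S|\ge (n-|A|)C/(C+1)$. This exceeds $(C/2)|A|$ because $|A|<n/C$ and $C\ge 4$: indeed $(n-|A|)C/(C+1)\ge n(C-1)/(C+1)\ge n/2>(C/2)|A|$ for $C\ge 3$.

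If $B'=\es$, then $|S|=n-|A|>n(1-1/C)\ge (C/2)(n/C)>(C/2)|A|$.

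\textbf{Expansion condition (ii).} Let $A,B$ be disjoint with $|A|,|B|\ge n/C$, and suppose there is no edge between them. Set $A_0=V\sm B$ and $B_0=V\sm A$. Both are proper subsets, their union is $V$, $A_0\sm B_0=A$, $B_0\sm A_0=B$, and there are no edges between $A$ and $B$. By $f$-connectedness,
\[
n-|A|-|B|=|A_0\cap B_0|\ge C\min\{|A|,|B|\}\ge n.
\]
This is a contradiction, since $|A|,|B|>0$.

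\textbf{Main obstacle.} The only delicate point is the boundary bookkeeping in (i), namely the case where $B'$ is tiny. The inequality chain above handles it. The remaining parameter checks for \cref{thm:c_expander} are routine.
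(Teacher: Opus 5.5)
Your proof is correct and matches the paper's: you use the same separations ($A\cup N(A)$ and $V\sm A$ for condition (i), $V\sm B$ and $V\sm A$ for condition (ii)) to show $G$ is a $C/2$-expander, then apply \cref{thm:c_expander} with parameter $C/2$. The only difference is cosmetic: when the minimum is $|B'|$, the paper uses $|B'|\le|A|$ to get $|N(A)|\ge n-2|A|>n/2$ directly, while you bound $|B'|\le|S|/C$, and both routes work.
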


It is easy to verify that the same example we presented above for showing the sharpness of \cref{thm:c_expander} implies that \cref{cor:f_connected} is tight up to the value of the constant $1/16$.
In fact, for every $f:\mathbb{N}\to\mathbb{N}$ with $n\ge f(1)+2$, the graph obtained from a clique on $n-1$ vertices by adding a vertex adjacent to exactly $f(1)$ clique vertices is $f$-connected, but is not $(f(1)+1)$-rigid.
On the other hand, it is easy to show that, for monotone non-decreasing $f$, every $n$-vertex $f$-connected graph is an $s$-connector whenever $f(s)>n-2s$, and in particular for $s=\min \{k\in \mathbb{N}:\,  f(k)\ge n\}$. Hence, by \cref{cor:connector}, assuming $n\ge 72s$, such a graph has an induced subgraph on more than $n-s$ vertices which is $\lfloor n/(72 s)\rfloor$-rigid.  It may be of interest to study the rigidity of $f$-connected graphs (as a function of $f(1)$) for more general values of $f$, potentially interpolating between our results here and Vill\'anyi's vertex-connectivity condition \cite{Vil25}.

\begin{remark}
    A graph $G=(V,E)$ is called \emph{generically globally rigid} in $\mathbb{R}^d$ if, for a generic embedding $\p:V\to \mathbb{R}^d$, every embedding $\textbf{q}:V\to \mathbb{R}^d$ satisfying $\|\textbf{q}(u)-\textbf{q}(v)\|=\|\p(u)-\p(v)\|$ for all $\{u,v\}\in E$ satisfies in fact $\|\textbf{q}(u)-\textbf{q}(v)\|=\|\p(u)-\p(v)\|$ for all $u,v\in V$.
    It was shown by Jord\'an \cite{Jor17}, relying on a theorem of Tanigawa \cite{Tan15}, that every $(d+1)$-rigid graph is generically globally rigid in $\mathbb{R}^d$. Using this fact, one may immediately strengthen the conclusions in \cref{thm:c_expander,thm:main} from rigidity to generic global rigidity (changing the bound $\lfloor C/8\rfloor $ to $\lfloor C/8\rfloor-1$ in the case of \cref{thm:c_expander}).
\end{remark}

\medskip

The paper is organized as follows.
In \cref{sec:preliminaries} we collect preliminaries on pseudorandom graphs and rigidity.
In \cref{sec:c_expanders} we prove \cref{thm:c_expander}.
In \cref{sec:jumbled_properties} we develop random-partition estimates for jumbled graphs, which are used in the following section.
In \cref{sec:main_proof} we prove \cref{thm:main_jumbled,thm:main}.
Finally, in \cref{sec:applications} we prove \cref{cor:random_regular,cor:connector,cor:f_connected}. Throughout the paper, all logarithms are in the natural base.

\section{Preliminaries}
\label{sec:preliminaries}

For a graph $G=(V,E)$ and sets $A,B\subseteq V$, write $e(A,B)=\left|\left\{(u,v)\in A\times B:\{u,v\}\in E\right\}\right|$.
For disjoint sets $X,Y\subseteq V$, write $E_G(X,Y)=\{e\in E:\, |e\cap X|=|e\cap Y|=1\}$; we suppress the subscript when $G$ is clear from context.
For an integer $m$, let $[m]=\{1,2,\ldots,m\}$ and $\binom{[m]}{2}=\{\{i,j\}:\, 1\le i<j\le m\}$.
For a finite set $V$, we say that $V_1,\ldots,V_m$ is a \defn{partition} of $V$ if $V=V_1\cup \cdots \cup V_m$, $V_i\cap V_j=\es$ for all $1\le i<j\le m$, and $V_i\ne \es$ for all $1\le i\le m$.
If, in addition, the sizes $|V_1|,\ldots,|V_m|$ differ by at most $1$, we call this partition an \defn{equipartition}.
For convenience, whenever we write an equipartition $V_1,\ldots,V_m$, we assume $|V_1|\ge\cdots\ge |V_m|$.
Equivalently, if $h=|V|-m\floor{|V|/m}$, then $|V_i|=\ceil{|V|/m}$ for $1\le i\le h$, and $|V_i|=\floor{|V|/m}$ for $h<i\le m$.
A \defn{uniformly random labelled equipartition} is a partition chosen uniformly among all labelled partitions with these prescribed part sizes.
Let $G=(V,E)$ be a graph, let $V_1,\ldots,V_m$ be a partition of $V$, and let $K$ be a positive integer.
A pair $\{i,j\}\in\binom{[m]}{2}$ is \defn{$K$-good} if, for every $X\subseteq V_i$ and $Y\subseteq V_j$ with $|X|=|Y|=K$, we have $E_G(X,Y)\ne\es$.
Otherwise, $\{i,j\}$ is \defn{$K$-bad}.
The \defn{$K$-reduced graph} associated with $G$ and the partition $V_1,\ldots,V_m$ is the graph $R=([m],E')$ with
\[
  E'=\left\{\{i,j\}\in\binom{[m]}{2}:\, \{i,j\}\text{ is $K$-good}\right\}.
\]

We will use the following simple observation to clean a sparse graph by discarding high-degree vertices.

\begin{lemma}
\label{lem:cleanup_reduced}
  Let $p\in[0,1/9]$.
  Let $G=(V,E)$ be an $n$-vertex graph with $|E|\le pn^2$.
  Then there exists $S\subseteq V$ with
  $|S|\ge (1-2\sqrt{p})n$ satisfying
  $\Delta(G[S])\le 3\sqrt{p}|S|$.
\end{lemma}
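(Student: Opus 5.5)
Let $T$ be the set of vertices of degree greater than $\sqrt{p}\,n$ in $G$, and set $S=V\setminus T$. The plan is to remove these high-degree vertices once and check that what remains is large and has small maximum degree relative to its own size.

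First we bound $|T|$ by counting degrees. Every vertex of $T$ has degree more than $\sqrt{p}\,n$, so
\[
|T|\sqrt{p}\,n<\sum_{v\in T}\deg(v)\le 2|E|\le 2pn^2.
\]
If $p=0$ then $T=\es$ and the claim is trivial, so assume $p>0$. Then $|T|<2\sqrt{p}\,n$, and hence $|S|\ge(1-2\sqrt{p})n$.

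Next we bound the maximum degree of $G[S]$. Every vertex of $S$ has degree at most $\sqrt{p}\,n$ in $G$, and its degree in $G[S]$ is no larger. It therefore suffices to show that $\sqrt{p}\,n\le 3\sqrt{p}|S|$, which is the same as $|S|\ge n/3$.

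For this we use the hypothesis $p\le 1/9$. It gives $\sqrt{p}\le 1/3$, so $|S|\ge(1-2/3)n=n/3$. This completes the argument.

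The only delicate point is the choice of threshold. It must be large enough that few vertices exceed it, and small enough that it is at most a constant multiple of $|S|$. The threshold $\sqrt{p}\,n$ together with the assumption $p\le1/9$ achieves both at once.
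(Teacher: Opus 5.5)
Your proof is correct and is essentially the paper's own proof. It uses the same set $S=\{v:\deg_G(v)\le\sqrt{p}\,n\}$, the same degree-sum bound on the removed vertices, and the same use of $\sqrt{p}\le 1/3$ to get $|S|\ge n/3$. One trivial quibble: the strict inequality $|T|\sqrt{p}\,n<\sum_{v\in T}\deg(v)$ fails when $T=\es$, since both sides are $0$; the non-strict version, which is all you need, always holds.
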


\begin{proof}
  If $p=0$, then $E=\es$, and we may take $S=V$.
  Assume then that $p>0$.
  Let
  \[
    S=\left\{v\in V:\deg_G(v)\le \sqrt{p}\,n\right\}.
  \]
  Then $|V\sm S|\cdot \sqrt{p}n \le 2|E|\le 2pn^2$,
  so $|S|\ge (1-2\sqrt{p})n$.
  For every $v\in S$,
  we have $\deg_{G[S]}(v)\le \deg_G(v)\le \sqrt{p}\,n$.
  Since $\sqrt{p}\le 1/3$ and $|S|\ge (1-2\sqrt{p})n$,
  we have $\Delta(G[S])\le \sqrt{p}\,n\le 3\sqrt{p}|S|$.
\end{proof}

\subsection{\texorpdfstring{$(n,r,\lambda)$-graphs}{(n,r,lambda)-graphs}}

The Expander Mixing Lemma goes back to Alon and Chung~\cite{AC88}.
We use the version for two arbitrary vertex subsets stated in~\cite{HLW06}*{Lemma~2.5}.
\begin{theorem}[Expander Mixing Lemma]
\label{thm:xml}
  Let $G=(V,E)$ be an $(n,r,\lambda)$-graph.
  Then, for all $A,B\subseteq V$,
  \[
    \left|e(A,B)-\frac{r}{n}|A||B|\right|
    \le \lambda\sqrt{|A||B|}.
  \]
\end{theorem}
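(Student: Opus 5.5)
The plan is the standard spectral argument. Let $A$ be the adjacency matrix of $G$. Since $G$ is $r$-regular, the all-ones vector $\mathbf{1}$ is an eigenvector with eigenvalue $r$. Because $A$ is real symmetric, we can fix an orthonormal eigenbasis $u_1=\mathbf{1}/\sqrt{n},u_2,\ldots,u_n$ with eigenvalues $r=\mu_1,\mu_2,\ldots,\mu_n$, where $|\mu_i|\le\lambda$ for $i\ge 2$ by the definition of an $(n,r,\lambda)$-graph. For sets $A,B\subseteq V$, write $\chi_A,\chi_B$ for their indicator vectors. Then $e(A,B)=\chi_A^{T}A\chi_B$, since this bilinear form counts exactly the ordered pairs $(u,v)\in A\times B$ with $\{u,v\}\in E$. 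This matches the convention for $e(A,B)$ fixed in \cref{sec:preliminaries}, including when $A\cap B\neq\es$.

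Next, expand $\chi_A=\sum_i a_iu_i$ and $\chi_B=\sum_i b_iu_i$. Then $a_1=|A|/\sqrt n$ and $b_1=|B|/\sqrt n$, so
\[
  e(A,B)=\sum_{i}\mu_i a_ib_i=\frac{r}{n}|A||B|+\sum_{i\ge 2}\mu_ia_ib_i .
\]
We bound the error term by the triangle inequality and Cauchy--Schwarz:
\[
  \Bigl|\sum_{i\ge2}\mu_ia_ib_i\Bigr|\le \lambda\Bigl(\sum_{i\ge2}a_i^2\Bigr)^{1/2}\Bigl(\sum_{i\ge2}b_i^2\Bigr)^{1/2}\le\lambda\|\chi_A\|\|\chi_B\|=\lambda\sqrt{|A||B|}.
\]
This is the claimed inequality.

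There is no real obstacle. The one point to check is that the trivial eigenvalue $r$ sits exactly on the $\mathbf{1}$ direction. If $G$ is disconnected, or bipartite, then $r$ or $-r$ is another eigenvalue, but the hypothesis that every non-top eigenvalue has modulus at most $\lambda$ still controls all the remaining coefficients, so the argument is unaffected. One could sharpen the bound to $\lambda\sqrt{|A|(1-|A|/n)|B|(1-|B|/n)}$ using $\sum_{i\ge2}a_i^2=|A|-|A|^2/n$, but the weaker form stated here is all that is needed.
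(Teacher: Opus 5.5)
Your proof is correct. It is the standard spectral argument: take an orthonormal eigenbasis containing $\mathbf{1}/\sqrt{n}$, isolate the $r|A||B|/n$ term, and bound the rest by $\lambda$ and Cauchy--Schwarz. This is the proof in the source the paper cites (Hoory--Linial--Wigderson, Lemma~2.5); the paper itself gives no proof.

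Your reading of the hypothesis, that only one copy of $r$ is exempt from $|\mu_i|\le\lambda$, is the right one. Two small points: you should not use $A$ both for the adjacency matrix and for a vertex set, and your disconnected/bipartite remark is inaccurate. A disconnected graph has a second eigenvalue $r$ and so forces $\lambda\ge r$; it is only the bipartite case that gives $-r$.
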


Given a graph $G=(V,E)$ and a vertex set $U\subseteq V$, denote by $\partial U=E(U,V\sm U)$ the edge boundary of $U$.
Let
\[
  \iso(G)=\min_{\substack{U\subseteq V,\\ 1\le |U|\le |V|/2}}\frac{|\partial U|}{|U|}
\]
be the \defn{isoperimetric number} of $G$.
We use the following standard spectral expansion bound due to Alon and Milman~\cite{AM85}; see also~\cite{HLW06}*{Theorem~4.11}.

\begin{lemma}
\label{lem:spectral_iso}
  Let $G=(V,E)$ be an $(n,r,\lambda)$-graph.
  Then $\iso(G)\ge (r-\lambda)/2$.
\end{lemma}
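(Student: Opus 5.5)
The statement to prove is \cref{lem:spectral_iso}: $\iso(G)\ge (r-\lambda)/2$. I would use the standard Rayleigh-quotient argument with the indicator of $U$ projected orthogonally to the all-ones vector, applied to the Laplacian $L=rI-A$.

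Fix $U\subseteq V$ with $1\le |U|=k\le n/2$. Define $f=\mathbbm{1}_U-\frac{k}{n}\mathbbm{1}$, which is orthogonal to the all-ones vector $\mathbbm{1}$. The Laplacian quadratic form is $f^{T}Lf=\sum_{\{u,v\}\in E}(f(u)-f(v))^2$. Adding a multiple of $\mathbbm{1}$ does not change differences, so this sum equals $\sum_{\{u,v\}\in E}(\mathbbm{1}_U(u)-\mathbbm{1}_U(v))^2=|\partial U|$.

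On the other hand, the eigenvalues of $L$ on $\mathbbm{1}^{\perp}$ are $r-\mu$, where $\mu$ ranges over the nontrivial adjacency eigenvalues. Since $|\mu|\le\lambda$, each such eigenvalue is at least $r-\lambda$. Because $A$ is symmetric, expanding $f$ in an orthonormal eigenbasis within $\mathbbm{1}^{\perp}$ gives $f^{T}Lf\ge (r-\lambda)\|f\|^2$. A direct computation gives
\[
  \|f\|^2=k\Bigl(1-\frac{k}{n}\Bigr)^2+(n-k)\frac{k^2}{n^2}=\frac{k(n-k)}{n}.
\]

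Combining the two expressions, $|\partial U|\ge (r-\lambda)\,k(n-k)/n$. Since $k\le n/2$, we have $(n-k)/n\ge 1/2$, so $|\partial U|/|U|\ge (r-\lambda)/2$. Minimizing over $U$ gives the claim.

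The one point needing care is that a largest eigenvalue $r$ could have multiplicity greater than one, for instance if $G$ is disconnected. In that case the hypothesis forces $\lambda\ge r$, and the bound $(r-\lambda)/2\le 0$ holds trivially. Otherwise $\mathbbm{1}$ spans the top eigenspace, and the spectral bound on $\mathbbm{1}^{\perp}$ is valid as used above.
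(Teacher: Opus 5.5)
Your proof is correct and is precisely the standard argument behind the result the paper cites without proof (Alon--Milman; Hoory--Linial--Wigderson, Theorem~4.11): evaluating the Laplacian quadratic form at the centred indicator $\mathbbm{1}_U-\frac{k}{n}\mathbbm{1}\perp\mathbbm{1}$ gives $|\partial U|\ge (r-\lambda)\,k(n-k)/n$, and $k\le n/2$ finishes. One small point: replacing $(n-k)/n$ by $1/2$ uses $r-\lambda\ge 0$; if $\lambda>r$ (permitted, since $\lambda$ is only an upper bound) the claim holds trivially because $|\partial U|\ge 0$, which your closing remark essentially already observes.
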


\subsection{Rigidity}

\begin{lemma}[$0$-extension; e.g.,~\cite{TW85}]
\label{lem:0_extension}
  Let $G=(V,E)$ be a $d$-rigid graph, and let $G'$ be obtained from $G$ by adding a new vertex $v$ adjacent to at least $d$ vertices from $V$.
  Then $G'$ is $d$-rigid.
\end{lemma}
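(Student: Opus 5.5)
This lemma is classical (Tay--Whiteley), and I would prove it with the infinitesimal-rigidity criterion via the rigidity matrix. For a generic embedding, a graph on $N\ge d+1$ vertices is $d$-rigid if and only if its rigidity matrix $R(G,\p)$ has rank $dN-\binom{d+1}{2}$. For $N\le d$ the criterion is that the rank equals $\binom{N}{2}$, i.e.\ the graph is complete. The plan is to show that adding $v$ together with $d$ of its edges raises the rank by exactly $d$, which is also the increase in the target rank. Any further edges at $v$ can only increase the rank, so they cause no trouble.

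\textbf{Step 1 (reduction to $d$ edges).} Pick $d$ neighbours $u_1,\dots,u_d$ of $v$. By monotonicity of rigidity under adding edges, it suffices to treat the case where $v$ has exactly these $d$ neighbours. Since genericity is preserved under restriction, $\p|_V$ is generic, so $(G,\p|_V)$ is infinitesimally rigid.

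\textbf{Step 2 (block structure).} Order the columns so that the $d$ coordinates of $v$ come last. The rigidity matrix of $G'$ then has the block form
\[
R(G',\p)=\begin{pmatrix} R(G,\p) & 0\\ * & M\end{pmatrix},
\]
where $M$ is the $d\times d$ matrix with rows $\p(v)-\p(u_i)$. Because $\p$ is generic, the points $\p(v),\p(u_1),\dots,\p(u_d)$ are affinely independent. Hence $M$ is nonsingular and
\[
\operatorname{rank}R(G',\p)=\operatorname{rank}R(G,\p)+d.
\]

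\textbf{Step 3 (counting).} If $|V|\ge d+1$, then
\[
\operatorname{rank}R(G',\p)=d|V|-\binom{d+1}{2}+d=d(|V|+1)-\binom{d+1}{2},
\]
so $G'$ is infinitesimally rigid, and therefore rigid. If $|V|\le d$, the hypothesis forces $|V|=d$ and $v$ to be adjacent to all of $V$. Since $G$ is $d$-rigid on at most $d$ vertices, it is complete, so $G'$ is complete on $d+1$ vertices and hence $d$-rigid.

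\textbf{Main obstacle.} The only delicate point is justifying the equivalence between generic rigidity and the maximal rank of the rigidity matrix (Asimow--Roth). This is standard, and I would cite it alongside the rank bound~\cite{AR78} already used earlier in the paper. With that in hand, the nonsingularity of $M$ and the rank computation above are routine.
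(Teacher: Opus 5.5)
Your argument is correct, and the paper gives no proof of its own (it cites Tay--Whiteley), so what you wrote is essentially the standard rank proof behind that citation: for generic $\p$ the $d$ new rows carry the nonsingular block $M$, so the rank rises by exactly $d$, which matches the increase in $d|V|-\binom{d+1}{2}$, and you correctly reduce the case $|V|=d$ to complete graphs. The only external input is the Asimow--Roth equivalence of rigidity and infinitesimal rigidity at generic embeddings, which also makes $d$-rigidity independent of the choice of generic $\p$, and you flag it appropriately; the case $d=0$ is trivial by the paper's convention that every graph is $0$-rigid.
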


The following absorption lemma, which follows easily from \cref{lem:0_extension}, is the case $k=1$ of~\cite{KLM26+}*{Lemma~3.3}.
\begin{lemma}[Absorption]
\label{lem:absorption}
  Let $d\ge 1$ and let $G=(V,E)$ be a graph with $\iso(G)\ge d$.
  Let $W\subseteq V$ be such that $|W|\ge |V|/2$ and $G[W]$ is $d$-rigid.
  Then $G$ is $d$-rigid.
\end{lemma}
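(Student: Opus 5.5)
The plan is to add the vertices of $V\sm W$ to $G[W]$ one at a time, each time attaching a vertex that has at least $d$ neighbours in the current rigid set, and to apply the $0$-extension \cref{lem:0_extension} at every step. Because $0$-extension yields rigidity of the induced subgraph on the enlarged set, and adding further edges of $G$ never destroys rigidity, it suffices to prove the following. Suppose $U$ satisfies $W\subseteq U\subsetneq V$ and $G[U]$ is $d$-rigid. Then some vertex $v\in V\sm U$ has at least $d$ neighbours in $U$. Once this is established, we iterate from $U=W$ until $U=V$, and the conclusion is that $G=G[V]$ is $d$-rigid.

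To find such a vertex, I would use the isoperimetric bound applied to the complement $X=V\sm U$. Since $U\supseteq W$ and $|W|\ge |V|/2$, we get $1\le |X|\le |V|/2$, so by the definition of $\iso(G)$,
\[
  |\partial X| = |E(X,U)| \ge \iso(G)\,|X| \ge d|X|.
\]
The edges of $\partial X$ are partitioned according to their endpoint in $X$, so by averaging some $v\in X$ is incident to at least $d$ edges going to $U$. This $v$ therefore has at least $d$ neighbours in $U$, which is exactly what the $0$-extension step needs.

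There is essentially no obstacle here. The one point to check is that the size condition $|X|\le |V|/2$ is preserved throughout the process. It is, because $U$ only grows and always contains $W$, so $|X|$ only shrinks. One should also note that the rigidity obtained after each extension is that of the induced subgraph $G[U\cup\{v\}]$. This graph contains the $0$-extension graph as a spanning subgraph, and adding edges preserves $d$-rigidity.
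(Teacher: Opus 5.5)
Your proof is correct and takes the intended route. The paper does not write out a proof; it says only that the lemma follows easily from \cref{lem:0_extension}. Your iteration, which bounds $|\partial(V\sm U)|\ge d\,|V\sm U|$ using $|V\sm U|\le |V|/2$ and then averages to find a vertex with $d$ neighbours in $U$, is the same edge-counting argument the paper uses in Step 4 of the proof of \cref{thm:main_jumbled}.
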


Finally, we need the following result from \cite{KLM26+}, which is a main ingredient in our proofs of \cref{thm:c_expander,thm:main}.

\begin{theorem}[{\cite{KLM26+}*{Theorem~1.5}}]
\label{thm:connector_partition}
  Let $G=(V,E)$ be a graph, let $K,m$ be positive integers, and let $1/2<\eta\le 1$.
  Let $V_1,\ldots,V_m$ be a partition of $V$ satisfying $|V_i|>7K-3$ for all $1\le i\le m$.
  Let $G_0$ be the $K$-reduced graph associated with $G$ and this partition.
  Assume that $\delta(G_0)\ge \eta m-1$.
  Then there exists $W\subseteq V$ such that $G[W]$ is $d$-rigid for $d=\floor{(\eta-1/2)m}$, and $|V_i\sm W|\le 4K$ for all $1\le i\le m$.
\end{theorem}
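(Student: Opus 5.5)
The plan is a degeneration argument. Infinitesimal rigidity is generic, so it suffices to find \emph{one} embedding of a suitable $G[W]$ in $\RR^d$ whose rigidity matrix has rank $d|W|-\binom{d+1}{2}$. Rank is lower semicontinuous, so it is enough to control the rank in a limit. I would place all vertices of $V_i$ at (generic, small) perturbations of a common point $x_i\in\RR^d$, where $x_1,\ldots,x_m$ are themselves generic. This reduces the problem to two ingredients. The first is rigidity of the reduced graph $G_0$ at the points $x_i$. The second is a ``gluing'' statement saying that, inside each cluster, the constraints coming from good pairs force all vertices of the cluster to move like a single point.

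\textbf{Step 1 (reduced graph).} Show that a graph on $m$ vertices with minimum degree at least $\eta m-1$ is $d$-rigid for $d=\floor{(\eta-1/2)m}$. This is a minimum-degree criterion of the type proved in~\cite{KLM25}: any two vertices have at least $(2\eta-1)m-2\approx 2d$ common neighbours, and this degree excess over $m/2$ is exactly what drives it. I would take it as essentially known, or reprove it by the same kind of specialization/$0$-extension argument (\cref{lem:0_extension}). In fact I expect to need a robust version. For each $i$, let $J_i\subseteq N_{G_0}(i)$ be the set of neighbours $j$ that survive the cleaning in Step 2 below. I would require the graph with edge set $\{i,j\}$, $j\in J_i$, to still have minimum degree $\eta m-1$, up to a loss absorbed by the floor. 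Generic directions $x_j-x_i$, $j\in J_i$, then span $\RR^d$.

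\textbf{Step 2 (cleaning with at most $4K$ deletions per part).} For a $K$-good pair $\{i,j\}$, consider the bipartite graph $G[V_i,V_j]$. Any union of its components meeting $V_i$ in at least $K$ vertices must meet $V_j$ in more than $|V_j|-K$ vertices, because otherwise two $K$-sets with no edges between them would exist. From this, and $|V_i|>7K-3$, I would extract a giant component $C_{ij}$ containing all but fewer than $2K$ vertices of each side. Next I would choose $W\subseteq V$ and, for each $i$, keep only those pairs for which $W\cap(V_i\cup V_j)\subseteq C_{ij}$. The constraint is that $|V_i\sm W|\le 4K$ must hold uniformly in $m$. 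So I cannot delete the exceptional vertices of every pair. Instead, I would delete from $V_i$ only the vertices that are exceptional for many pairs, and use a double-counting/greedy argument: delete iteratively, and bound how many deletions a single part can suffer by using $K$-goodness again on the deleted set. I would allow each remaining vertex to be exceptional for a few pairs, those pairs simply being unavailable to it.

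\textbf{Step 3 (gluing).} In the limiting framework, an edge $uv$ with $u\in V_i$, $v\in V_j$ gives the constraint $(x_i-x_j)\cdot(\dot u-\dot v)=0$. Along a connected component $C_{ij}$ this forces the projection of $\dot u$ onto the direction $x_i-x_j$ to be constant on both sides. Each kept vertex $u\in V_i$ lies in the giant components for at least $d$ indices $j$ with generic directions. Hence $\dot u$ is determined by the $d$-tuple of common projections, so all of $V_i\cap W$ moves as a single point $\dot x_i$. The resulting motion of the points $x_i$ is an infinitesimal motion of $G_0$ restricted to kept pairs, so by Step 1 it is trivial. A perturbation argument then lifts the rank count from the degenerate framework to a generic one. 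The main obstacle is Step 2, together with its interaction with Step 1. One needs a deletion scheme costing only $4K$ vertices per part while still leaving every surviving vertex with enough glued directions. The first thing I would try is to charge deletions in $V_i$ to $K$-sets in $V_i$ that lack edges to some $V_j$; goodness then caps each such set at size below $K$, and a few rounds give the $4K$ bound.
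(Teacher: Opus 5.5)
The paper does not prove this statement; it quotes it from the authors' earlier work. So I can only judge your outline on its own terms, and it has a genuine gap at its core.

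\textbf{What works.} Your degenerate framework is sound. Placing all of $V_i$ at a generic point $x_i$ is legitimate: edges inside a cluster give zero rows, and since $m\ge d+1$ the $x_i$ affinely span $\RR^d$, so the trivial motions still form a $\binom{d+1}{2}$-dimensional space. Lower semicontinuity of rank then transfers rigidity to a generic embedding. Moreover, once each $V_i\cap W$ is known to move as one point, every $K$-good pair automatically supplies an edge between $V_i\cap W$ and $V_j\cap W$, since both have at least $K$ vertices. So no ``robust'' Step~1 is needed: $G_0$ itself is $d$-rigid by the minimum-degree criterion, with room to spare.

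\textbf{The gap: the gluing.} This is the real content of the theorem, and it fails in three places.
\begin{enumerate}[label=(\alph*)]
\item The inference in Step~3 is a non sequitur. If $u$ lies in the giant components for the indices in $J_u$, with $|J_u|\ge d$, then $\dot u$ is determined by the values $c_{ij}$, $j\in J_u$. But two kept vertices with $|J_u\cap J_{u'}|<d$ need not satisfy $\dot u=\dot u'$. What you need is that any two kept vertices of $V_i$ share at least $d$ good indices. This, not Step~1, is where $d=\floor{(\eta-1/2)m}$ comes from.
\item Projections are constant only along components of $H_{ij}(W)$, the bipartite graph of edges of $G$ between $V_i\cap W$ and $V_j\cap W$. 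A path in $C_{ij}$ through deleted vertices gives no constraint in $G[W]$. Hence the exceptional sets depend on $W$, a circularity your proposal never resolves.
\item Step~2, which is exactly where the bound $4K$ must be earned, is only a plan. The charging to ``$K$-sets lacking edges to some $V_j$'' cannot work as stated: for a good pair, every $K$-subset of $V_i$ is adjacent to all but at most $K-1$ vertices of $V_j$.
\end{enumerate}

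\textbf{A peeling argument closes these gaps with the stated constants.}
\begin{enumerate}[label=(\roman*)]
\item \emph{Giant components.} If $|P|,|Q|\ge 3K-2$ and every $K$-subset of $P$ and every $K$-subset of $Q$ span an edge, the bipartite graph between them has a component missing at most $K-1$ vertices of each side. It is the only component with at least $K$ vertices in $P$. Hence these giants can only shrink as vertices are deleted.
\item \emph{Peeling.} Start from $W=V$. Repeatedly delete every $u\in V_i\cap W$ that lies outside the current giant of $H_{ij}(W)$ for at least $m/4$ indices $j\in N_{G_0}(i)$.
\item \emph{Counting.} For fixed $i,j$, let $Z_j$ be the set of deleted vertices of $V_i$ that were outside the $(i,j)$-giant when they were deleted. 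Because giants only shrink, $Z_j$ has no neighbour in the $V_j$-side of the current giant, which has at least $K$ vertices. So $|Z_j|\le K-1$ by $K$-goodness. Double counting gives $|V_i\sm W|\cdot m/4\le (K-1)(m-1)$, so fewer than $4K$ vertices are ever removed from a part.
\item \emph{Size control.} Inductively, $|V_i\cap W|\ge 3K-2$ at every stage, so (i) applies throughout. This is what $|V_i|>7K-3$ guarantees, since it gives $|V_i|-4K\ge 3K-2$.
\item \emph{Gluing.} At the end, each kept vertex of $V_i$ misses fewer than $m/4$ of the at least $\eta m-1$ good indices of $i$. So two kept vertices share more than $(\eta-1/2)m-1$, hence at least $d$, indices. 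The generic directions $x_i-x_j$ for these indices span $\RR^d$, so the cluster moves as one point, and Step~1 applied to $G_0$ finishes the proof.
\end{enumerate}
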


\section{\texorpdfstring{$C$-expanders}{C-expanders}}
\label{sec:c_expanders}

\begin{proof}[Proof of \cref{thm:c_expander}]
  Let $G=(V,E)$ be an $n$-vertex $C$-expander, where $C\ge 8$ and $n\ge 9C$.
  Put
  \[
    d=\floor{C/8},\qquad
    K=\ceil{n/(2C)},\qquad
    m=\floor{C/4}.
  \]
  Then $\floor{m/2}=\floor{C/8}=d$.
  Let $V_1,\ldots,V_m$ be an arbitrary equipartition of $V$.
  Set $x=n/(2C)$ and $k=\ceil{x}=K$, so that $x\ge 9/2$ and $k\ge 5$.
  Since $m\le C/4$, every part satisfies $|V_i|\ge\floor{8x}$.
  If $k=5$, then $\floor{8x}\ge 36>7k-3$.
  If $k\ge 6$, then $x>k-1$, and hence $\floor{8x}\ge 8k-7>7k-3$.
  Hence every part satisfies $|V_i|>7K-3$.

  Let $G_0$ be the $K$-reduced graph associated with $G$ and the partition $V_1,\ldots,V_m$.
  Since $K\ge n/(2C)$, the second condition in \cref{def:c_expander} gives $G_0=K_m$.
  Applying \cref{thm:connector_partition} with $\eta=1$, we obtain a set $W_0\subseteq V$ such that $G[W_0]$ is $\floor{m/2}$-rigid, and
  \[
    |V_i\sm W_0|\le 4K
    \quad\text{for all }1\le i\le m.
  \]
  In particular,
  \begin{equation}
  \label{eq:c_expander_w0_size}
    |W_0|
    \ge n-4mK
    \ge n\left(1-\frac{2m}{C}\right)-4m
    \ge \frac{n}{2}-C.
  \end{equation}
  Since $\floor{m/2}\ge d$, the graph $G[W_0]$ is $d$-rigid.

  Extend $W_0$ to an inclusion-maximal set $W\subseteq V$ such that $G[W]$ is $d$-rigid, and put $U=V\sm W$.
  We claim that $U=\es$.
  By maximality and \cref{lem:0_extension}, every vertex of $U$ has at most $d-1$ neighbours in $W$.
  Consequently,
  \begin{equation}
  \label{eq:c_expander_uw_edges}
    |E_G(U,W)|\le (d-1)|U|.
  \end{equation}

  Suppose first that $0<|U|<K$.
  Then $|U|<n/(2C)$, so \cref{def:c_expander} gives $|N(U)|\ge C|U|$.
  Since $U=V\sm W$, we have $N(U)\subseteq W$.
  Therefore,
  \[
    C|U|\le |N(U)|\le |E_G(U,W)|\le (d-1)|U|<C|U|,
  \]
  a contradiction.
  Hence either $U=\es$ or $|U|\ge K$.

  Suppose, for contradiction, that $|U|\ge K$.
  For every $K$-element set $A\subseteq U$, we have $|W\sm N(A)|<K$.
  Indeed, otherwise there would be a set $B\subseteq W\sm N(A)$ with $|B|=K$, and then $A$ and $B$ would be disjoint sets of size at least $n/(2C)$ with no edge between them, contradicting \cref{def:c_expander}.
  Thus every such set $A$ sends more than $|W|-K$ edges to $W$.
  Since $W_0\subseteq W$, \eqref{eq:c_expander_w0_size} gives $|W|\ge n/2-C$.
  Also, since $d\le C/8$, $K\le n/(2C)+1$, and $n\ge 3C$,
  \[
    dK\le \frac{n}{16}+\frac{C}{8}
    \le \frac{n}{16}+\frac{n}{24}
    =\frac{5n}{48}
    <\frac{n}{6}
    \le \frac{n}{2}-C
    \le |W|.
  \]
  By averaging over all $K$-element subsets of $U$, and using $dK<|W|$, we obtain
  \[
    |E_G(U,W)|
    >\frac{(|W|-K)|U|}{K}
    >(d-1)|U|,
  \]
  contradicting \eqref{eq:c_expander_uw_edges}.
  Therefore $U=\es$, and so $G=G[W]$ is $d$-rigid.
\end{proof}

\section{Pseudorandom properties of jumbled graphs}
\label{sec:jumbled_properties}

Fix $p\in(0,1)$. For an $n$-vertex graph $G=(V,E)$ and a set $U\subseteq V$, write
\[
  B(U)=\left\{v\in V:\, |N(v)\cap U|<\frac{p|U|}{2}\right\},
\]
the set of \defn{bad vertices with respect to $U$}.

The following simple lemma is a mild variation of~\cite{Kri16}*{Corollary~2.2}.
\begin{lemma}
\label{lem:large_set_degrees}
  Let $G=(V,E)$ be an $n$-vertex $(p,\beta)$-jumbled graph.
  Write $\eps=\beta/(np)$.
  Let $\alpha\in(0,1)$, and let $U\subseteq V$ satisfy $|U|\ge \alpha n$.
  Then
  \[
    |B(U)|\le (4\eps^2/\alpha)n.
  \]
\end{lemma}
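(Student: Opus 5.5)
Apply the jumbledness condition with $A=B(U)$ and $B=U$. Write $b=|B(U)|$ and $u=|U|$. If $b=0$ there is nothing to prove, so assume $b>0$.

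By the definition of $B(U)$, every vertex of $B(U)$ has fewer than $pu/2$ neighbours in $U$. Summing over $B(U)$ gives
\[
  e(B(U),U)<\frac{pbu}{2}.
\]
The count $e(A,B)$ is over ordered pairs in $A\times B$, so it equals $\sum_{v\in A}|N(v)\cap B|$ even when $A$ and $B$ overlap. We therefore need no disjointness between $B(U)$ and $U$.

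Jumbledness gives the lower bound $e(B(U),U)\ge pbu-\beta\sqrt{bu}$. Combining the two bounds,
\[
  \frac{pbu}{2}<\beta\sqrt{bu}, \qquad\text{so}\qquad \sqrt{bu}<\frac{2\beta}{p}.
\]
Squaring yields $bu<4\beta^2/p^2=4\eps^2n^2$. Since $u\ge \alpha n$, this gives $b<4\eps^2n^2/(\alpha n)=(4\eps^2/\alpha)n$, which is the claimed bound (in fact with strict inequality).

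There is no real obstacle here. The one point to check is that the ordered-pair convention for $e(A,B)$ makes the lower bound valid for overlapping sets, and it does.
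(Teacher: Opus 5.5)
Your proof is correct and follows essentially the same route as the paper's: the same upper bound $e(B(U),U)<pb|U|/2$ from the definition of bad vertices is combined with the jumbledness lower bound and then squared, with $|U|\ge\alpha n$ finishing the argument. Your remark that the ordered-pair convention for $e(A,B)$ removes any need for $B(U)$ and $U$ to be disjoint is a correct check that the paper uses implicitly.
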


\begin{proof}
  Let $B=B(U)$, and write $b=|B|$.
  If $b=0$, there is nothing to prove. Hence assume $b\ge 1$. By the definition of $B(U)$,
  \[
    e(B,U)=\sum_{v\in B}|N(v)\cap U|
    <\frac{p b|U|}{2}.
  \]
  On the other hand, by the definition of $(p,\beta)$-jumbledness,
  \[
    e(B,U)\ge  p b|U|-\beta \sqrt{b|U|}.
  \]
  Therefore,
  \[
    p b |U|/2<\beta\sqrt{b|U|}.
  \]
  Squaring gives
  \[
    b<\frac{4\beta^2 }{p^2|U|}
    \le (4\eps^2/\alpha)n,
  \]
  as required.
\end{proof}

For an $n$-vertex graph $G=(V,E)$, a number $\alpha\in(0,1)$, and a set $X\subseteq V$, we say that $X$ is \defn{$\alpha$-poor} if its closed neighbourhood misses at least an $\alpha$-fraction of the vertices, namely if $|V\sm (X\cup N(X))|\ge \alpha n$.

The proof of the next lemma is based on the same ordered-exposure counting strategy as~\cite{DK23}*{Lemma~2.4}.
\begin{lemma}
\label{lem:poor_sets_general}
  Let $G=(V,E)$ be an $n$-vertex $(p,\beta)$-jumbled graph.
  Write $\eps=\beta/(np)$.
  Let $\alpha\in(0,1)$, and let $K\ge 3\log(1/\alpha)/p$ be an integer.
  Put $\mu=4\eps^2/\alpha$.
  Then the number of $\alpha$-poor $K$-sets is at most
  \[
    \left(\frac{2e\mu^{1/4} n}{K}\right)^K.
  \]
\end{lemma}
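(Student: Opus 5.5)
The plan is an ordered-exposure count in the spirit of~\cite{DK23}*{Lemma~2.4}. We count ordered sequences $(x_1,\ldots,x_K)$ of distinct vertices whose underlying set is $\alpha$-poor, and then divide by $K!$. If $\mu\ge 1$ the bound is trivial, since $\binom{n}{K}\le (en/K)^K\le (2e\mu^{1/4}n/K)^K$. So we assume $\mu<1$.

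Fix an $\alpha$-poor $K$-set $X$ and put $Y=V\sm(X\cup N(X))$, so that $|Y|\ge\alpha n$. Take any ordering $x_1,\ldots,x_K$ of $X$ and define
\[
  U_i=V\sm\bigl(\{x_1,\ldots,x_i\}\cup N(\{x_1,\ldots,x_i\})\bigr),\qquad U_0=V.
\]
These sets are nested, $U_0\supseteq U_1\supseteq\cdots\supseteq U_K$. Since $Y\subseteq U_i$ for all $i$, we have $|U_i|\ge\alpha n$ throughout. Call step $i+1$ \emph{bad} if $x_{i+1}\in B(U_i)$ and \emph{good} otherwise. By \cref{lem:large_set_degrees}, applicable because $|U_i|\ge\alpha n$, there are at most $|B(U_i)|\le\mu n$ choices for $x_{i+1}$ at a bad step. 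Crucially, $U_i$ is determined by $x_1,\ldots,x_i$, so this bound holds for each prefix.

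At a good step, $x_{i+1}$ has at least $p|U_i|/2$ neighbours in $U_i$, all of which are removed. Hence $|U_{i+1}|\le(1-p/2)|U_i|$. If $t$ is the number of good steps, then
\[
  \alpha n\le|U_K|\le(1-p/2)^t n\le e^{-pt/2}n,
\]
which gives $t\le 2\log(1/\alpha)/p\le 2K/3$ by the assumption on $K$. Therefore at least $K/3$ steps are bad.

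Now we count sequences. There are at most $2^K$ ways to choose which steps are good. A good step has at most $n$ choices and a bad step at most $\mu n$ choices. So the number of valid sequences is at most $2^K n^K\mu^{K/3}\le 2^Kn^K\mu^{K/4}$, using $\mu<1$. Every ordering of every $\alpha$-poor $K$-set is such a sequence, because the argument above applied to an arbitrary ordering. Dividing by $K!\ge(K/e)^K$ gives at most $(2e\mu^{1/4}n/K)^K$ poor sets.

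The only delicate points are that the bad-vertex bound must be applied to the adaptively defined $U_i$, which needs $|U_i|\ge\alpha n$ (supplied by $Y$), and that the number of good steps must be bounded using $K\ge 3\log(1/\alpha)/p$. I do not expect either to be a real obstacle.
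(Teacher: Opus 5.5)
Your proposal is correct and follows essentially the same ordered-exposure argument as the paper. You use the same nested sets $U_i$, the same bad-vertex bound from \cref{lem:large_set_degrees}, the same $(1-p/2)$ shrinkage at good steps, and the same division by $K!\ge(K/e)^K$; the only cosmetic difference is that you bound the number of good steps directly to get at least $K/3$ bad steps and then weaken $\mu^{K/3}$ to $\mu^{K/4}$, whereas the paper shows by contradiction that there are at least $K/4$ exceptional times and counts via $\binom{K}{\lceil K/4\rceil}\le 2^K$.
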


\begin{proof}
  If $K>n$, then there are no $K$-sets, so we may assume that $K\le n$.
  If $\mu\ge 1$, then the number of $\alpha$-poor $K$-sets is at most
  \[
    \binom{n}{K}
    \le \left(\frac{en}{K}\right)^K
    \le \left(\frac{2e\mu^{1/4}n}{K}\right)^K,
  \]
  so the result follows.
  Hence assume that $\mu<1$.
  Take an ordered $K$-tuple of distinct vertices $(x_1,\dots,x_K)$.
  For $1\le t\le K$, let $S_t=\{x_1,\dots,x_t\}$ and $U_t=V\sm (S_t\cup N(S_t))$. In addition, let $U_0=V$.

  Suppose that $S_K$ is $\alpha$-poor.
  Then $|U_K|\ge \alpha n$.
  Since $U_0\supseteq U_1\supseteq \cdots \supseteq U_K$, we have $|U_{t-1}|\ge \alpha n$ for every $1\le t\le K$.
  For $1\le t\le K$, call time $t$ \defn{exceptional} if $x_t\in B(U_{t-1})$. By \cref{lem:large_set_degrees}, for fixed $x_1,\ldots,x_{t-1}$, there are at most $\mu n$ possible values of $x_t$ that make time $t$ exceptional.

  We claim that every $\alpha$-poor $K$-tuple (that is, every ordering of every $\alpha$-poor $K$-set) has at least $K/4$ exceptional times.
  Indeed, assume for contradiction that fewer than $K/4$ times are exceptional.
  Then at least $3K/4$ times are non-exceptional.
  At every non-exceptional time $t$,
  \[
    |N(x_t)\cap U_{t-1}|\ge \frac{p|U_{t-1}|}{2}.
  \]
  Hence
  \[
    |U_t|
    \le |U_{t-1}|-|N(x_t)\cap U_{t-1}|
    \le \left(1-p/2\right)|U_{t-1}|.
  \]
  At exceptional times, we still have $U_t\subseteq U_{t-1}$.
  Therefore,
  \[
    |U_K|
    \le n\left(1-p/2\right)^{3K/4}
    \le n\exp\left(-\frac{3p K}{8}\right)
    \le n\exp\left(-\frac{9}{8}\log(1/\alpha)\right)
    <\alpha n,
  \]
  a contradiction.

  Next, we count $\alpha$-poor $K$-tuples in the following way. Let $\ell=\ceil{K/4}$. First, we choose $\ell$ times in which the $K$-tuple is required to be exceptional. Then, for each chosen time $t$, once $x_1,\ldots,x_{t-1}$ have been fixed, there are at most $\mu n$ choices for $x_t$. For every other time $t$, we use the trivial bound of $n$ choices for $x_t$. Note that every $\alpha$-poor $K$-tuple is counted at least once in this way. Hence, the number of $\alpha$-poor $K$-tuples is at most
  \[
    \binom{K}{\ell}(\mu n)^\ell n^{K-\ell}
    =\binom{K}{\ell}\mu^\ell n^K
    \le 2^K\mu^{K/4}n^K,
  \]
  where the last inequality uses $\mu<1$.
  Each $\alpha$-poor $K$-set has $K!$ orderings.
  Since $K!\ge (K/e)^K$,
  the number of $\alpha$-poor $K$-sets is at most
  \[
    \frac{2^K\mu^{K/4}n^K}{K!}
    \le \left(\frac{2e\mu^{1/4} n}{K}\right)^K.\qedhere
  \]
\end{proof}

\begin{lemma}
\label{lem:random_parts}
  Let $G=(V,E)$ be an $n$-vertex $(p,\beta)$-jumbled graph.
  Write $\eps=\beta/(np)$. Let $\alpha\in(0,1)$ and $C>1$.
  Let $n\ge m\ge np/(C-1)$ and $K\in[3\log(1/\alpha)/p,n/4]$ be integers.
  Put $\mu=4\eps^2/\alpha$ and $\theta=C^2/\log^2(1/\alpha)$, and assume that $\mu\le 1$.
  Let $V=V_1\cup\cdots\cup V_m$ be a uniformly random labelled equipartition.
  Then, for every fixed $\{i,j\}\in\binom{[m]}{2}$,
  \[
    \pr(\{i,j\}\text{ is $K$-bad})
    \le
    \left(\frac{8e^2}{9}\theta\mu^{1/4}\right)^K
    +
    \left(\frac{4e^2}{9}\theta\alpha\right)^K.
  \]
\end{lemma}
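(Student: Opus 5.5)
The plan is a union bound over a single witness of badness, split according to whether one side of the witness is $\alpha$-poor. If $\{i,j\}$ is $K$-bad, fix $X\subseteq V_i$ and $Y\subseteq V_j$ with $|X|=|Y|=K$ and $E_G(X,Y)=\es$. Since $V_i\cap V_j=\es$, we get $Y\subseteq V\sm(X\cup N(X))$, and symmetrically $X\subseteq V\sm(Y\cup N(Y))$. There are two cases.

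\emph{Case~1: $X$ or $Y$ is $\alpha$-poor.} This case will produce the $\mu^{1/4}$ term.

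\emph{Case~2: neither $X$ nor $Y$ is $\alpha$-poor.} Then $Y$ is a $K$-subset of $T_X:=V\sm(X\cup N(X))$, and $|T_X|<\alpha n$. This case will produce the $\alpha$ term.

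The basic probabilistic input is the following. Let $s=\ceil{n/m}$, the largest part size. For fixed disjoint $K$-sets $X,Y$,
\[
\pr(X\subseteq V_i,\ Y\subseteq V_j)\le\prod_{t=0}^{2K-1}\frac{s-t}{n-t}\le (s/n)^{2K},
\]
because $s\le n$. To see this, view the uniformly random labelled equipartition as a uniformly random ordering of $V$ cut into consecutive blocks of prescribed sizes, and reveal the positions of the $2K$ vertices one at a time. I also need to convert $s$ into $C$ and $p$. From $m\ge np/(C-1)$ and $p<1$ we get $s\le n/m+1\le (C-1)/p+1\le C/p$. Together with $K\ge 3\log(1/\alpha)/p$ this gives $s/K\le C/(3\log(1/\alpha))$, hence
\[
\frac{s^2}{K^2}\le \frac{\theta}{9}.
\]
This is exactly where $\theta$ and the constant $1/9$ come from.

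For Case~2, I will union bound over all $K$-sets $X$ (at most $\binom{n}{K}\le(en/K)^K$ choices) and over all $K$-subsets $Y$ of $T_X$. There are at most $\binom{\alpha n}{K}\le(e\alpha n/K)^K$ choices of $Y$, and $T_X$ is determined by $X$ alone, so no randomness is lost. Multiplying by $(s/n)^{2K}$ gives at most
\[
\left(\frac{e^2\alpha s^2}{K^2}\right)^K\le\left(\frac{e^2}{9}\theta\alpha\right)^K,
\]
which lies within the claimed $(4e^2\theta\alpha/9)^K$.

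For Case~1, assume first that $X$ is poor. By \cref{lem:poor_sets_general} (applicable since $K\ge 3\log(1/\alpha)/p$), there are at most $(2e\mu^{1/4}n/K)^K$ choices for $X$, and at most $(en/K)^K$ choices for $Y$. Multiplying by $(s/n)^{2K}$ gives at most $(2e^2\mu^{1/4}\theta/9)^K$. The case where $Y$ is poor is symmetric and doubles this. The factor $2$ is absorbed, since $2\cdot(2x)^K\le(8x)^K$ with room to spare. This yields the first term.

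The only delicate point is the bookkeeping. The probability that two fixed disjoint $K$-sets land in the two prescribed parts must be bounded by $(s/n)^{2K}$, which needs the sequential-revealing argument for equipartitions with unequal part sizes. Then $s$ must be converted into $C/p$ using the assumed lower bound on $m$. The hypotheses $K\le n/4$ and $\mu\le 1$ only guarantee nondegeneracy and the applicability of the earlier lemmas, and the stated constants leave slack in both terms.
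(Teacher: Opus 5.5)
Your plan (a union bound over ordered witness pairs $(X,Y)$, split by $\alpha$-poorness, with \cref{lem:poor_sets_general} for poor sets and $s\le C/p$, $s^2/K^2\le\theta/9$) is the paper's. However, your key probabilistic estimate is false. Revealing the positions of the vertices of $X$ and then of $Y$ gives exactly
\[
  \pr(X\subseteq V_i,\ Y\subseteq V_j)
  =\prod_{t=0}^{K-1}\frac{n_i-t}{n-t}\cdot\prod_{t=0}^{K-1}\frac{n_j-t}{n-K-t}
  =\frac{(n_i)_K(n_j)_K}{(n)_{2K}}.
\]
By contrast, $\prod_{t=0}^{2K-1}\frac{s-t}{n-t}$ is the probability that all $2K$ vertices land in one fixed part of size $s$. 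The factors of the second product above can exceed $s/n$; already $n_j/(n-K)>s/n$ when $n_j=s$. So $(s/n)^{2K}$ is not an upper bound. For $K=1$ and $n_i=n_j=s$ the probability is $s^2/(n(n-1))>(s/n)^2$; for instance $n=4$, $m=2$ gives $1/3>1/4$, and this case satisfies $K\le n/4$. The hypothesis $K\le n/4$, which you dismiss as mere nondegeneracy, is exactly what repairs this. It gives $(n)_{2K}\ge (n/2)^{2K}$, hence $\pr(X\subseteq V_i,\ Y\subseteq V_j)\le (2s/n)^{2K}$, which is what the paper uses.

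With the corrected bound, your counts become exactly $(4e^2\theta\alpha/9)^K$ in Case~2 and $(8e^2\theta\mu^{1/4}/9)^K$ for ``$X$ poor''. The factor $4^K$ uses up all the slack you thought you had. The extra factor $2$ from also treating ``$Y$ poor'' separately is then no longer absorbed, and your final bound exceeds the stated one. The fix is to split only on whether $X$ is poor. Your Case~2 uses nothing about $Y$ beyond $Y\subseteq T_X$, so it already covers pairs with $X$ non-poor and $Y$ poor. This is precisely the paper's argument: it bounds the number of edgeless ordered pairs by (number of $\alpha$-poor $K$-sets)$\cdot\binom{n}{K}+\binom{n}{K}\binom{\floor{\alpha n}}{K}$ and multiplies by $(2s/n)^{2K}$. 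Alternatively, the sharper estimate $(n)_{2K}\ge e^{-2K/3}n^{2K}$ would leave room for your doubling. It holds for $K\le n/4$, using $\log(1-x)\ge -x-x^2$ on $[0,1/2]$, but you would need to prove it.
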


\begin{proof}
  Fix $i\ne j$, and let $n_s=|V_s|$ for $s\in[m]$.
  Since $p<1$, for every $s\in[m]$ we have
  \[
    n_s\le \ceil{\frac{n}{m}}
    \le \frac{n}{m}+1
    \le \frac{(C-1)}{p}+\frac{1}{p}
    =\frac{C}{p}.
  \]
  Now fix two disjoint $K$-sets $X,Y\subseteq V$.
  Viewing the equipartition as a uniformly random assignment to labelled slots, the $2K$ vertices of $X\cup Y$ have $(n)_{2K}$ equally likely placements, of which $(n_i)_K(n_j)_K$ place $X$ in $V_i$ and $Y$ in $V_j$.
  Therefore, using $K\le n/4$, we have
  \begin{equation}
  \label{eq:place_pair}
    \pr(X\subseteq V_i,\ Y\subseteq V_j)
    =\frac{(n_i)_K(n_j)_K}{(n)_{2K}}
    \le \frac{n_i^Kn_j^K}{(n/2)^{2K}}
    =\left(\frac{2n_i}{n}\right)^K
     \left(\frac{2n_j}{n}\right)^K
    \le \left(\frac{2C}{np}\right)^{2K},
  \end{equation}
  where, for non-negative integers $a,b$, $(a)_b=a(a-1)\cdots(a-b+1)$.
  Let $\cP$ be the family of $\alpha$-poor $K$-sets.
  By \cref{lem:poor_sets_general},
  \[
    |\cP|\le \left(\frac{2e\mu^{1/4} n}{K}\right)^K.
  \]
  Let $N_{\es}$ be the set of ordered pairs $(X,Y)$ of disjoint $K$-subsets of $V$ with $E_G(X,Y)=\es$.
  Note that, for a $K$-set $X\notin \cP$, if $(X,Y)\in N_{\es}$ for some $K$-set $Y$, then $Y\subseteq V\sm (X\cup N(X))$. As $|V\sm (X\cup N(X))|< \alpha n$, we obtain that there are at most $\binom{\lfloor \alpha n\rfloor}{K}$ $K$-sets $Y$ such that $(X,Y)\in N_{\es}$.
  Hence, using $\binom{n}{K}\le (en/K)^K$, we obtain
  \[
  \begin{aligned}
    |N_{\es}|
    &\le |\cP|\binom{n}{K}
       +\binom{n}{K}\binom{\floor{\alpha n}}{K} \\
    &\le
    \left(2e^2\mu^{1/4}\left(\frac{n}{K}\right)^2\right)^K
    +
    \left(e^2\alpha\left(\frac{n}{K}\right)^2\right)^K.
  \end{aligned}
  \]
  Also, by the definition of $\theta$ and the lower bound on $K$, we have $(C/(pK))^2\le \theta/9$.
  By the union bound and \eqref{eq:place_pair},
  \[
  \begin{aligned}
    \pr(\{i,j\}\text{ is $K$-bad})
    &\le |N_{\es}|\left(\frac{2C}{np}\right)^{2K}
    \le
    \left(
      2e^2\mu^{1/4}
      \left(\frac{2C}{p K}\right)^2
    \right)^K
    +
    \left(
      e^2\alpha
      \left(\frac{2C }{p K}\right)^2
    \right)^K \\
    &\le
    \left(\frac{8e^2}{9}\theta\mu^{1/4}\right)^K
    +
    \left(\frac{4e^2}{9}\theta\alpha\right)^K.
  \end{aligned}
  \]
  This is the required bound.
\end{proof}

\section{\texorpdfstring{Proof of \cref{thm:main_jumbled,thm:main}}{Proof of the main theorems}}
\label{sec:main_proof}

\begin{proof}[Proof of \cref{thm:main_jumbled}]

Let $c_1=10^{-21}$ and $c_2=10^{-6}$. Let $G$ be an $n$-vertex $(p,\beta)$-jumbled graph with $\beta\le c_1 np$. Let $d=\lfloor c_2 np \rfloor$. If $d=0$, there is nothing to prove. Hence, we assume $d\ge1$. That is, $np\ge c_2^{-1}=10^6$. Let
\[
    m=\lfloor np\cdot 10^{-4}\rfloor
\]
and
\[
    K= \lceil 100/p \rceil.
\]
Note that $100\le m\le n/10000$.

The proof has four steps.
  First, we sample a uniformly random labelled equipartition of $V$ into $m$ parts, and consider its bad-pair graph (that is, the complement of its $K$-reduced graph).
  \Cref{lem:random_parts} says that, for each fixed pair of parts, the probability that this pair is bad is small; hence some equipartition has a sparse bad-pair graph.
  Then, \cref{lem:cleanup_reduced} lets us delete a small set of indices and obtain an induced subgraph $R_0$ of the corresponding $K$-reduced graph with minimum degree slightly larger than half its order.
  Second, we apply \cref{thm:connector_partition} to the subgraph of $G$ induced by the parts whose indices survive this deletion, obtaining a $d$-rigid subgraph $G[W]$.
  Third, we show that this rigid subgraph contains more than $90\%$ of the vertices of $G$: the cleanup step deletes few parts, and the set $W$ obtained from \cref{thm:connector_partition} contains almost all vertices from each surviving part.
  Finally, we use \cref{lem:0_extension} to extend $d$-rigidity from $G[W]$ to a set $S$ of size $|S|\ge (1-2(\beta/(np))^2)n$.

 \paragraph{Step 1: A random partition.}
  For an equipartition $\rho=(U_1,\ldots,U_m)$ of $V$, let $B_\rho$ be the graph on $[m]$ whose edges are the pairs $\{i,j\}$ that are $K$-bad with respect to $U_i,U_j$.
  Let $R_\rho$ be the $K$-reduced graph associated with $G$ and $\rho$. Thus $R_\rho$ is the complement of $B_\rho$.

  Let $\pi$ be a uniformly random labelled equipartition of $V$ into $m$ parts. For \cref{lem:random_parts}, take $C=3\cdot 10^4$ and $\alpha=10^{-14}$. Then, we have $m\ge np\cdot 10^{-4}-1\ge np/(C-1)$. Also,
  \[
      3\log{(1/\alpha)}/p \le 100/p\le K\le 100/p+1 \le n/10^4+1\le n/4.
  \]
  Write $\eps=\beta/(np)\le c_1$, and let $\mu= 4 \eps^2/\alpha$. Note that $\mu\le 4 c_1^2/\alpha =4 \cdot 10^{-28}<1$.
  Using
  \[
    \theta=\frac{C^2}{\log^2(1/\alpha)}\le 9 \cdot 10^5
  \]
  we obtain, by \cref{lem:random_parts}, that for every fixed pair $\{i,j\}\in\binom{[m]}{2}$,
  \[
    \pr(\{i,j\}\in E(B_\pi))
    \le
     \left(\frac{8e^2}{9}\theta\mu^{1/4}\right)^K
    +
    \left(\frac{4e^2}{9}\theta\alpha\right)^K
    \le 10^{-7}.
  \]
  Thus, setting $\gamma=10^{-7}$, we obtain
  \[
    \E |E(B_\pi)|\le \gamma \binom{m}{2}< \gamma m^2.
  \]
  Hence there exists an equipartition $\sigma=(V_1,\ldots,V_m)$ for which $|E(B_\sigma)|\le \gamma m^2$.
  Fix such a $\sigma$.
  By \cref{lem:cleanup_reduced}, there exists $I\subseteq[m]$ such that, setting $R_0=R_{\sigma}[I]$ and $m'=|I|$,
  \begin{equation}
  \label{eq:many_indices}
    m'\ge (1-2\sqrt{\gamma})m
    \quad\text{and}\quad
    \Delta(B_\sigma[I])\le 3\sqrt{\gamma}\,m'.
  \end{equation}
  Since $R_\sigma$ is the complement of $B_\sigma$, we have
  \[
    \delta(R_0)
    \ge m'-1-\Delta(B_\sigma[I])
    \ge (1-3\sqrt{\gamma})m'-1 \ge 0.999 m'-1.
  \]

  \paragraph{Step 2: Application of \cref{thm:connector_partition}.}
  Since the partition $\sigma$ is an equipartition and $m\le np \cdot 10^{-4}$, we have $|V_i|\ge \floor{n/m}\ge 10^4/p-1> 7K-3$.
  Write
  \[
    V_I=\bigcup_{i\in I}V_i.
  \]
  Choose a bijection $\phi:[m']\to I$, and write $V'_a=V_{\phi(a)}$ for $a\in[m']$.
  The sets $V'_1,\ldots,V'_{m'}$ form a partition of $V_I$, satisfying $|V'_a|>7K-3$ for every $a\in[m']$.
  The $K$-reduced graph associated with $G[V_I]$ and this partition is isomorphic to $R_0$.
  Taking $\eta=0.999$ and applying \cref{thm:connector_partition}, we obtain a set $W\subseteq V_I$ such that $G[W]$ is $d_0$-rigid, where $d_0=\floor{0.499m'}$.
  Since $\gamma=10^{-7}$, $m'\ge (1-2\sqrt{\gamma})m$, $m\ge 10^{-4}np-1$, and $np\ge 10^6$, we have $d_0\ge d$.
  Thus $G[W]$ is $d$-rigid.
  Moreover,
  \begin{equation}
  \label{eq:connector_loss}
    |V_i\sm W|\le 4K
    \quad\text{for every }i\in I.
  \end{equation}

  \paragraph{Step 3: The rigid set is large.}
  We next show that $|W|>0.9 n$.
  By \eqref{eq:many_indices}, the discarded index set $[m]\sm I$ has size at most $2\sqrt{\gamma}m$.
  Since each part has size at most $n/m+1$,
  \begin{equation}
  \label{eq:discarded_vertices}
    |V\sm V_I|
    \le 2\sqrt{\gamma}m\left(\frac{n}{m}+1\right)
    =2\sqrt{\gamma}(n+m)
    \le 2\sqrt{\gamma}(1+10^{-4})n
    <10^{-3}n.
  \end{equation}
  Also, by \eqref{eq:connector_loss}, and using $m'\le m$, $K\le 100/p+1\le 200/p$ and $m\le np\cdot 10^{-4}$, we get
  \begin{equation}
  \label{eq:remaining_vertices}
    \sum_{i\in I}|V_i\sm W|
    \le 4Km
    \le 4(200/p) np \cdot 10^{-4} = 0.08n.
  \end{equation}
  Since $W\subseteq V_I$, we have $|W|=n-|V\sm V_I|-\sum_{i\in I}|V_i\sm W|$.
  Combining this identity with \eqref{eq:discarded_vertices} and \eqref{eq:remaining_vertices}, we get $|W|\ge 0.9 n$.

  \paragraph{Step 4: Absorption.} Let $S\supseteq W$ be a maximal set such that $G[S]$ is $d$-rigid. Then $|S|\ge |W|\ge 0.9n$. Put $U=V\sm S$. If $U=\es$, there is nothing to prove. Otherwise, by the maximality of $S$ and the $0$-extension property (\cref{lem:0_extension}), each vertex in $U$ has less than $d$ neighbours in $S$. Hence, $e(U,S)/|U|<d$. On the other hand, since $G$ is a $(p,\beta)$-jumbled graph, we have
  \[
    10^{-6}np\ge d>\frac{e(U,S)}{|U|}\ge p|S|-\beta\sqrt{\frac{|S|}{|U|}}\ge 0.9np-\beta\sqrt{\frac{n}{|U|}}.
  \]
  We obtain
  \[
    |U|\le \left(\frac{\beta}{(0.9-10^{-6})np}\right)^2 n\le 2 (\beta/(np))^2 n.
  \]
  So $|S|\ge (1-2(\beta/(np))^2)n$, as wanted.
\end{proof}

The proof of \cref{thm:main} follows as a simple corollary of \cref{thm:main_jumbled}.

\begin{proof}[Proof of \cref{thm:main}]
We may assume that $r\ge1$, as otherwise the statement is trivial.
Let $c_1=10^{-21}$ and $c_2=10^{-6}$, as in the proof of \cref{thm:main_jumbled}.
Let $G=(V,E)$ be an $(n,r,\lambda)$-graph with $\lambda\le c_1 r$, and put $p=r/n$ and $\beta=\lambda$.
By \cref{thm:xml}, $G$ is $(p,\beta)$-jumbled with $\beta\le c_1np$.
Thus, by \cref{thm:main_jumbled}, there is $S\subseteq V$ such that $|S|\ge (1-2c_1^2)n\ge n/2$ and $G[S]$ is $d$-rigid for $d=\floor{c_2np}= \floor{c_2 r}$.
If $d=0$, there is nothing to prove.
By \cref{lem:spectral_iso},
\[
  \iso(G)\ge (r-\lambda)/2 \ge (1-c_1)r/2\ge d.
\]
Therefore, by \cref{lem:absorption}, $G$ is $d$-rigid, as wanted.

\end{proof}

\section{Applications}
\label{sec:applications}

\subsection{Random regular graphs}

We use the following spectral estimate for random regular graphs.
Here $\lambda(G)$ denotes the maximum absolute value of the non-trivial adjacency eigenvalues of $G$.
\begin{theorem}
\label{thm:random_regular_spectral}
  Let $1\le r=r(n)<n$ satisfy $nr$ even, and let $G\sim\cG_{n,r}$.
  Then $\lambda(G)=O(\sqrt r)$ \whp{}.
\end{theorem}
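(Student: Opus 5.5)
This is the known spectral estimate for random regular graphs, and I would not reprove it from scratch: it follows by combining several results from the literature, covering all ranges of $r$. The plan is to split by the size of $r$ and cite the appropriate theorem in each range.

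For bounded or slowly growing degree, Friedman's theorem gives $\lambda(G)\le 2\sqrt{r-1}+o(1)$ \whp{} for every fixed $r\ge 3$. More useful here is the result of Broder, Frieze, Suen and Upfal, and its sharpening by Cook, Goldstein and Johnson, who proved $\lambda(G)=O(\sqrt r)$ \whp{} for $r=O(n^{2/3})$. Earlier, Kahn and Szemer\'edi obtained the same bound for bounded $r$ via the configuration model.

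For the range $r\ge n^{2/3}$ up to $r\le n/2$, I would invoke Tikhomirov and Youssef, who proved $\lambda(G)=O(\sqrt r)$ \whp{} for $n^{\Omega(1)}\le r\le n/2$. For $r>n/2$, I would pass to the complement. The complement $\bar G$ of a uniformly random $r$-regular graph is uniformly random $(n-1-r)$-regular. Its non-trivial eigenvalues are $-1-\mu$, where $\mu$ ranges over the non-trivial eigenvalues of $G$. Hence $\lambda(G)\le\lambda(\bar G)+1=O(\sqrt{n-1-r})+1=O(\sqrt r)$, which is valid when $n-1-r\ge 1$. The case $r=n-1$ is trivial, since then $G=K_n$ and $\lambda=1$. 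The small cases $r\in\{1,2\}$ are also trivial, because every eigenvalue is at most $r\le 2=O(\sqrt r)$ in absolute value.

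The only real obstacle is making sure the cited ranges overlap so that every $r=r(n)$ is covered. Since ``\whp{}'' is asymptotic, I would handle an arbitrary sequence $r(n)$ by the standard subsequence argument. Suppose the bound fails; then there is a subsequence along which the failure probability stays bounded away from $0$. Passing to a further subsequence, we may assume $r$ lies entirely in one of the ranges above, where the corresponding theorem gives the contradiction.
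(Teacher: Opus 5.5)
Your proposal is correct and follows essentially the same route as the paper: cite Tikhomirov--Youssef together with the earlier sparse estimates for $r\le n/2$, and handle $r>n/2$ by passing to the complement $H\sim\mathcal{G}_{n,n-1-r}$, using $\lambda(G)\le 1+\lambda(H)$ and $n-1-r<r$. The only difference is that you spell out which cited results cover which ranges (fixing an exponent such as $2/3$ in Tikhomirov--Youssef so their constant is absolute) and give the subsequence argument, whereas the paper simply refers to the remark after Theorem~A of Tikhomirov--Youssef.
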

For an asymptotically sharp estimate in the range $1\ll r\le \gamma n$, for some absolute constant $\gamma>0$, see Sarid~\cites{Sar23}.
\begin{proof}
For $1\le r\le n/2$, the estimate is recorded after Theorem~A of Tikhomirov--Youssef~\cites{TY19}, where it follows by combining their theorem with the earlier sparse estimates cited there.
For $r>n/2$, put $s=n-1-r<n/2$.
If $s=0$, then $G=K_n$ and $\lambda(G)=1$.
Otherwise, let $H$ be the complement of $G$, so that $H\sim\cG_{n,s}$, and let $x$ be an eigenvector of $G$ with eigenvalue $\mu\ne r$.
Since $G$ is regular, $x$ is orthogonal to the all-ones vector.
Writing $A_G,A_H$ for the adjacency matrices of $G,H$ and $J$ for the all-ones matrix, we have $A_H=J-I-A_G$, and hence
\[
  A_Hx=(J-I-A_G)x=(-1-\mu)x.
\]
Thus $x$ is an eigenvector of $H$ with eigenvalue $-1-\mu$.
Therefore $\lambda(G)\le 1+\lambda(H)$.
Applying the case $s\le n/2$ to $H$ gives $\lambda(H)=O(\sqrt s)$ \whp{}, and since $s<r$, we get $\lambda(G)=O(\sqrt r)$ \whp{}.
\end{proof}

\begin{proof}[Proof of \cref{cor:random_regular}]
  Let $c_1,c_2>0$ be the constants from \cref{thm:main}.
  By \cref{thm:random_regular_spectral}, there is an absolute constant $C_0$ such that $\lambda(G)\le C_0\sqrt r$ \whp{}.
  Choose $R$ so large that $C_0/\sqrt r\le c_1$ for every $r\ge R$.
  Choose $c>0$ such that $c\le c_2$ and $cR<1$.
  If $r<R$, then $\floor{cr}=0$, and the assertion is vacuous.
  If $r\ge R$, then \cref{thm:random_regular_spectral} gives $\lambda(G)\le c_1r$ \whp{}, so \cref{thm:main} gives that $G$ is \whp{} $\floor{c_2r}$-rigid, and hence also $\floor{cr}$-rigid.
\end{proof}

\subsection{Further applications}

\begin{proof}[Proof of \cref{cor:connector}]
  Put $C=n/(9s)$.
  Starting with $V(G)$, repeatedly delete a nonempty set $A$ of at most $s$ vertices whose external neighbourhood in the current graph has size less than $C|A|$.
  Fewer than $s$ vertices are deleted: otherwise, when the deleted set $D$ first has size at least $s$, we have $s\le |D|<2s$, and summing the corresponding inequalities gives $|N_G(D)|<C|D|<2n/9$.
  But the $s$-connector property gives $|N_G(D)|>n-|D|-s>n-3s>2n/9$, a contradiction.

  Let $U$ be the set left by the process, and write $H=G[U]$ and $N=|U|$.
  Thus $|U|>n-s$, and $|N_H(A)|\ge C|A|$ whenever $|A|\le s$.
  If $s<|A|<N/(2C)$, then the $s$-connector property gives $|N_H(A)|>N-|A|-s>N/2>C|A|$; here we used $N>n-s\ge71s$ and $N/(2C)\le n/(2C)=9s/2$.
  Moreover, $N/(2C)>4s>s$, so the $s$-connector property also gives an edge between every two disjoint sets of size at least $N/(2C)$.
  Hence $H$ is a $C$-expander.
  Finally, $C\ge8$ and $N\ge n-s+1\ge n/s=9C$, so \cref{thm:c_expander} shows that $H$ is $\floor{C/8}=\floor{n/(72s)}$-rigid.
\end{proof}

\begin{proof}[Proof of \cref{cor:f_connected}]
  Let $G=(V,E)$ be an $n$-vertex $f$-connected graph.
  We first show that $G$ is a $C/2$-expander.
  Let $\es\ne X\subseteq V$ satisfy $|X|<n/C$.
  If $N(X)=V\sm X$, then $|N(X)|>n/2>(C/2)|X|$.
  Otherwise, applying $f$-connectedness to $A=X\cup N(X)$ and $B=V\sm X$ gives
  \[
    |N(X)|\ge C\min\{|X|,n-|X|-|N(X)|\}.
  \]
  If the minimum is $|X|$, then $|N(X)|\ge C|X|$.
  Otherwise, $|N(X)|\ge n-2|X|>n/2>(C/2)|X|$.
  Thus the first condition in \cref{def:c_expander} holds with expansion parameter $C/2$.

  Now let $X,Y\subseteq V$ be disjoint sets with $|X|,|Y|\ge n/C$, and suppose that there is no edge between them.
  Set $Z=V\sm(X\cup Y)$, $A=X\cup Z$, and $B=Y\cup Z$.
  Then $f$-connectedness gives
  \[
    |Z|\ge C\min\{|X|,|Y|\}\ge n,
  \]
  a contradiction.
  Hence $G$ is a $C/2$-expander.
  Since $C/2>8$ and $n\ge9C\ge9(C/2)$, \cref{thm:c_expander} shows that $G$ is $\floor{C/16}$-rigid.
\end{proof}

\begin{acknowledgements}
  We thank Sahar Diskin for helpful discussions about this work.
\end{acknowledgements}

\bibliography{library}

\end{document}